\newcommand\da{\!\downarrow\!}
\newcommand\la{\leftarrow}
\newcommand\ten{\otimes}
\newcommand\hten{\hat{\otimes}}
\newcommand\CC{\mathrm{C}}
\renewcommand\H{\mathrm{H}}
\newcommand\HC{\mathrm{HC}}
\newcommand\HP{\mathrm{HP}}
\newcommand\HN{\mathrm{HN}}
\newcommand\Z{\mathbb{Z}}
\newcommand\Q{\mathbb{Q}}
\newcommand\R{\mathbb{R}}
\newcommand\Cx{\mathbb{C}}
\newcommand\bA{\mathbb{A}}
\newcommand\bG{\mathbb{G}}
\newcommand\bP{\mathbb{P}}
\newcommand\bS{\mathbb{S}}
\newcommand\C{\mathcal{C}}
\newcommand\cA{\mathcal{A}}
\newcommand\cB{\mathcal{B}}
\newcommand\cD{\mathcal{D}}
\newcommand\cJ{\mathcal{J}}
\newcommand\sA{\mathscr{A}}
\newcommand\sE{\mathscr{E}}
\newcommand\sH{\mathscr{H}}
\newcommand\sO{\mathscr{O}}
\renewcommand\L{\Lambda}
\newcommand\Alg{\mathrm{Alg}}
\newcommand\CAlg{\mathrm{CAlg}}
\newcommand\Hom{\mathrm{Hom}}
\newcommand\HHom{\underline{\mathrm{Hom}}}
\newcommand\Ext{\mathrm{Ext}}
\newcommand\cone{\mathrm{cone}}
\newcommand\cocone{\mathrm{cocone}}
\newcommand\per{\mathrm{per}}
\newcommand\Gal{\mathrm{Gal}}
\newcommand\Ch{\mathrm{Ch}}
\newcommand\ch{\mathrm{ch}}
\newcommand\Spec{\mathrm{Spec}\,}
\newcommand\Set{\mathrm{Set}}
\newcommand\Aff{\mathrm{Aff}}
\newcommand\Poly{\mathrm{Poly}}
\newcommand\FD{\mathrm{FD}}
\newcommand\Lim{\varprojlim}
\newcommand\LLim{\varinjlim}
\newcommand\ho{\mathrm{ho}\!}
\newcommand\into{\hookrightarrow}
\newcommand\abuts{\implies}
\newcommand\xra{\xrightarrow}
\newcommand\fin{\mathrm{fin}}
\newcommand\ssp{\mathrm{ssp}}
\newcommand\Blanc{\mathrm{Blanc}}
\newcommand\bt{\bullet}
\newcommand\by{\times}
\newcommand\Vect{\mathrm{Vect}}
\newcommand\Symm{\mathrm{Symm}}
\newcommand\an{\mathrm{an}}
\newcommand\Tot{\mathrm{Tot}\,}
\newcommand\ind{\mathrm{ind}}
\newcommand\pro{\mathrm{pro}}
\newcommand\Sm{\mathrm{Sm}}
\newcommand\red{\mathrm{red}}
\newcommand\dR{\mathrm{dR}}
\newcommand\op{\mathrm{opp}}
\newcommand\co{\colon\thinspace}
\newcommand\oR{\mathbf{R}}
\newcommand\oL{\mathbf{L}}
\newcommand\uleft\underleftarrow
\newcommand\uline\underline
\newcommand\uright\underrightarrow
\newtheorem{theorem}{Theorem}[section]
\newtheorem{proposition}[theorem]{Proposition}
\newtheorem{corollary}[theorem]{Corollary}
\newtheorem{lemma}[theorem]{Lemma}
\newtheorem*{theorem*}{Theorem}
\newtheorem*{proposition*}{Proposition}
\newtheorem*{corollary*}{Corollary}
\newtheorem*{lemma*}{Lemma}
\newtheorem*{conjecture*}{Conjecture}
\theoremstyle{definition}
\newtheorem{definition}[theorem]{Definition}
\newtheorem*{definition*}{Definition}
\theoremstyle{remark}
\newtheorem{remark}[theorem]{Remark}
\newtheorem{remarks}[theorem]{Remarks}
\newtheorem*{example*}{Example}
\newtheorem*{examples*}{Examples}
\newtheorem*{remark*}{Remark}
\newtheorem*{remarks*}{Remarks}
\newtheorem*{exercise*}{Exercise}
\newtheorem*{property*}{Property}
\newtheorem*{properties*}{Properties}
\begin{document}
\title{Smooth functions on algebraic $K$-theory}
\author{J.P.Pridham}
\thanks{This work was supported by  the Engineering and Physical Sciences Research Council [grant number EP/I004130/2].}

\begin{abstract}
 For any complex scheme $X$ or  any dg category, there is an associated $K$-theory presheaf on the category of complex affine schemes. We study real $\C^{\infty}$ functions on this presheaf, defined by Kan extension, and show that they are closely related to real Deligne cohomology. When $X$ is quasi-compact and semi-separated, and for various non-commutative derived schemes, these smooth  functions on $K$-theory are dual to the homotopy fibre of the Chern character from Blanc's semi-topological $K$-theory to  cyclic homology.
 \end{abstract}

\maketitle

\section*{Introduction}

 The idea of looking at smooth functions on algebraic $K$-theory presheaves was introduced in \cite{smoothK}. There, we looked at presheaves on a category of Fr\'echet manifolds, and showed that for the Fr\'echet algebra of complex analytic functions on a polydisc, the smooth functions on connective algebraic $K$-theory are the linear dual of a form of real Deligne cohomology.

The motivation behind this paper was the desire to adapt this characterisation to more general dg categories and to non-connective $K$-theory, in particular to obtain results for smooth proper complex varieties without having to impose descent by hand after the fact as in \cite{smoothK}. 

Rather than working with presheaves on Fr\'echet manifolds, our primary focus is to look at functors on $\C^{\infty}$-rings. This is justified because for any Fr\'echet manifold $M$ and 
proper dg category $\cA$,  we have $\C^{\infty}(M,\cA) \simeq \C^{\infty}(M,\R)\ten_{\R}\cA$, and $\C^{\infty}(M,\R) $ is a $\C^{\infty}$-ring.
Since we concentrate on dg categories $\cA$ without extra topological structure,  the adjunction between $\C^{\infty}$-rings and commutative $\Cx$-algebras then allows us to frame our constructions in terms of the $K$-theory presheaf 
\[
 \uline{K}(\cA)(Z):= K(\cA\ten_{\Cx}\Gamma(Z,\sO_Z))
\]
on complex affines  as considered by Blanc in \cite{blancToplKTh}. 

Our main result is Corollary \ref{maincor}, which shows that for any  quasi-compact and semi-separated derived scheme $X$, and for various non-commutative derived schemes as well, the smooth functions 
\[
 \oR\C^{\infty}(\uline{K}(X), \R)
\]
on the non-connective $K$-theory presheaf $\uline{K}(X)$ are given by $\R$-linear functions on the homotopy fibre of the Chern character
\[
\ch \co  K^{\Blanc,st}(X)_{\R} \to \HC^{\Cx}(X)_{[-2]},
\]
where $ K^{\Blanc,st}$ is Blanc's semi-topological $K$-theory. As explained in Remark \ref{Delignecoho}, this homotopy fibre only differs from real Deligne cohomology by a $\beta$-torsion module for the Bott element $\beta$. 

Our approach to proving this is very different from that in \cite{smoothK},   appealing directly to Goodwillie's comparison \cite{goodwillieChern} between $K$-theory and cyclic homology instead of resorting to symmetric space calculations. This appeal is made possible by considering the de Rham presheaf
\[
 \uline{K}(\cA)_{\dR}(B):=\uline{K}(\cA)(B^{\red}), 
\]
an idea which can be traced back to \cite{Gr,simpsonHtpy}. 
In \S \ref{blancsn}, we show 
that smooth functions on $\uline{K}(\cA)_{\dR}$ are just given by real linear functions on  $ K^{\Blanc,st}(\cA)$.  

de Rham  presheaves then  allow us to use  Goodwillie's comparison in a systematic fashion, and we reformulate it in Corollary \ref{goodwilliecor} to say that for many complex dg categories $\cA$ of geometric origin, the homotopy fibres of $ \uline{K}(\cA)_{\Q} \to \uline{K}(\cA)_{\dR,\Q}$ and $ \uline{\HC}^{\Q}(\cA) \to \uline{\HC}^{\Q}(\cA)_{\dR}$ are equivalent. The key calculations  of the paper are in \S \ref{eilmacsn}, showing that smooth functions on smash products of Eilenberg--MacLane spectra of complex affine spaces  are just given by real multilinear maps. As a consequence, Theorem \ref{HCthm} shows that smooth functions on the rational cyclic homology presheaf $ \uline{\HC}^{\Q}(\cA)$ are the same as linear functions on real cyclic homology $\HC^{\R}(\cA) $, with the main result Corollary \ref{maincor} as a consequence.

Finally, in \S \ref{vistas} we discuss various related constructions and generalisations of these results. These include  an analytic setting extending \cite{smoothK} by considering dg categories enriched in topological vector spaces. An  analogue of real Deligne cohomology is then given by   compactly supported distributions on the   algebraic $K$-theory presheaf on formal neighbourhoods of infinite-dimensional manifolds.  

\tableofcontents

\section{Smooth functions on algebraic $K$-theory}

\subsection{Preliminaries on  $\C^{\infty}$-rings}

\subsubsection{Complex affines}

\begin{definition}
 Write $\CAlg(\Cx)$ for the category of commutative $\Cx$-algebras. We denote the opposite category by $\Aff_{\Cx}$, the category of complex affine schemes. 
\end{definition}

\begin{definition}
 Let $\CAlg(\Cx)_{\fin} \subset \CAlg(\Cx)$ be the full subcategory  of finitely generated $\Cx$-algebras, and 
 write  $\Aff_{\Cx,\fin}:=  (\CAlg(\Cx)_{\fin})^{\op}$.
\end{definition}

Observe that because $\Cx$ is Noetherian, finitely generated commutative $\Cx$-algebras are finitely presented, so the colimit functor $\ind( \CAlg(\Cx)_{\fin})\to \CAlg(\Cx)$ is an equivalence.

\subsubsection{$\C^{\infty}$-rings}

We now consider the $\C^{\infty}$-rings of \cite[\S 2.2]{joyceAGCinfty}.

\begin{definition}
Recall that  a $\C^{\infty}$-ring on $A$ is a set equipped with compatible operations $A^n \to A$ for   every $\C^{\infty}$-morphism $f \co \R^n \to \R$. There is a forgetful functor from $\C^{\infty}$-rings to commutative $\R$-algebras.

 Write $\C^{\infty}\Alg$ for the category of  $\C^{\infty}$-rings, and denote the opposite category by $\C^{\infty}\Aff$. Given $Z \in \C^{\infty}\Aff$, denote the corresponding object of $\C^{\infty}\Alg $ by $O^{\infty}(Z)$; given $A \in \C^{\infty}\Alg$, denote the corresponding object of $\C^{\infty}\Aff $ by $\Spec_{\infty}A$.
\end{definition}

\begin{definition}
Let $\C^{\infty}\Alg_{\fin} \subset \C^{\infty}\Alg$ be the full subcategory on finitely presented objects, and write   $\C^{\infty}\Aff_{\fin}:=  (\C^{\infty}\Alg_{\fin})^{\op}$.
\end{definition}

Observe that  the colimit functor $\ind( \C^{\infty}\Alg_{\fin})\to \C^{\infty}\Alg$ is an equivalence.

\subsubsection{Smooth functions on complex affines}

\begin{lemma}\label{ustarlemma}
 The functor $u \co \C^{\infty}\Alg \to \CAlg(\Cx)$ given by $u(A)=A \ten_{\R} \Cx$ has a left adjoint $u^*$, which has the property that for any smooth $\Cx$-algebra $S$, $u^*S$ is the ring of real $\C^{\infty}$-functions on the complex manifold $(\Spec S)(\Cx)$. 
\end{lemma}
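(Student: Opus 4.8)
The plan is to establish existence of the left adjoint abstractly by the adjoint functor theorem, and then to compute $u^*S$ for smooth $S$ by reducing to polynomial algebras and presenting a general smooth algebra as a finite colimit of these.

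First I would verify that $u$ admits a left adjoint. Both $\C^{\infty}\Alg$ and $\CAlg(\Cx)$ are categories of models of finitary algebraic theories, hence locally finitely presentable, so that in each all limits and all filtered colimits are computed on underlying sets. The underlying set of $u(A)=A\ten_{\R}\Cx$ is naturally $A\times A$ (using the real basis $1,i$ of $\Cx$), and this assignment preserves limits and filtered colimits of sets; hence $u$ preserves all limits and all filtered colimits. Since an accessible, limit-preserving functor between locally presentable categories is a right adjoint, $u$ admits a left adjoint $u^*$.

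For the explicit description I would first treat a polynomial algebra $S=\Cx[z_1,\dots,z_n]$. Using the adjunction, $\Hom_{\C^{\infty}\Alg}(u^*S,A)\cong\Hom_{\CAlg(\Cx)}(\Cx[z_1,\dots,z_n],uA)\cong(A\ten_{\R}\Cx)^n\cong A^{2n}$ naturally in $A$, the last bijection splitting each factor into real and imaginary parts. As $A^{2n}\cong\Hom_{\C^{\infty}\Alg}(\C^{\infty}(\R^{2n}),A)$ by freeness of $\C^{\infty}(\R^{2n})$ on $2n$ generators, Yoneda gives $u^*\Cx[z_1,\dots,z_n]\cong\C^{\infty}(\R^{2n})=\C^{\infty}(\Cx^n)$, matching $(\Spec S)(\Cx)=\Cx^n$. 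For a general \emph{smooth} $S$, choose a closed embedding $\Spec S\into\bA^n_{\Cx}$, giving a presentation $S=\Cx[z_1,\dots,z_n]/(f_1,\dots,f_m)$ that exhibits $S$ as the coequalizer of the two maps $\Cx[y_1,\dots,y_m]\rightrightarrows\Cx[z_1,\dots,z_n]$ sending $y_j$ to $f_j$ and to $0$. Because $u^*$ preserves colimits, $u^*S$ is the corresponding coequalizer in $\C^{\infty}\Alg$, namely $\C^{\infty}(\Cx^n)/I_F$, where $I_F$ is the $\C^{\infty}$-ideal generated by the real and imaginary parts of $f_1,\dots,f_m$ viewed as smooth functions on $\Cx^n=\R^{2n}$.

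It then remains to identify this quotient with $\C^{\infty}(M)$ for $M:=(\Spec S)(\Cx)$, a closed complex submanifold of $\Cx^n$ since $S$ is smooth. Restriction of functions gives a surjection $\C^{\infty}(\Cx^n)\to\C^{\infty}(M)$ with kernel the ideal $I_M$ of smooth functions vanishing on $M$, and this restriction kills $I_F$; so the task reduces to the equality $I_F=I_M$, which I expect to be the \textbf{main obstacle}. The point is that smoothness of $S$ is exactly the Jacobian (transversality) condition, and via the Cauchy--Riemann equations a holomorphic $f_j$ with nonvanishing differential has real and imaginary parts with independent real differentials; hence near each point of $M$ a suitable subset of the $\mathrm{Re}\,f_j,\mathrm{Im}\,f_j$ cuts $M$ out transversally, while the remaining defining functions lie in the $\C^{\infty}$-ideal these generate. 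This yields $I_F=I_M$ locally, and I would globalize using partitions of unity, which are available for $\C^{\infty}$-rings of functions on manifolds. The essential input, that for a transversally cut out submanifold the $\C^{\infty}$-ideal of the defining equations coincides with the full vanishing ideal, is a standard fact of $\C^{\infty}$-algebraic geometry that I would cite. Combined with the colimit computation this gives $u^*S\cong\C^{\infty}(M)$, as claimed.
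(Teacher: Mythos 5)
Your proposal is correct, and its computational core is the same as the paper's: reduce to polynomial algebras, pass to quotients using preservation of colimits by the left adjoint, and identify $\C^{\infty}(\Cx^n,\R)/(\Re f_1,\Im f_1,\ldots,\Re f_m,\Im f_m)$ with $\C^{\infty}((\Spec S)(\Cx),\R)$ using smoothness of $S$. Where you genuinely differ is in producing $u^*$: the paper factors $u$ as the forgetful functor $\C^{\infty}\Alg \to \CAlg(\R)$ followed by $-\ten_{\R}\Cx$, and constructs the left adjoint of each factor explicitly --- the first from the morphism of monads $\Symm_{\R}V \to \C^{\infty}(V^{\vee},\R)$ (both categories being monadic over real vector spaces), the second as Weil restriction of scalars --- so that the formula $u^*\Cx[z_1,\ldots,z_n]=\C^{\infty}(\Cx^n,\R)$ falls out of the construction; you instead get existence abstractly from the adjoint functor theorem for locally presentable categories and then recover the value on polynomial algebras by a Yoneda/representability argument. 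Your route avoids having to exhibit and verify the monad morphism and the Weil restriction step, at the price of an extra (easy) corepresentability computation; the paper's route is more constructive and isolates where the passage from $\Cx$ to $\R$ happens. A further difference in your favour: you spell out the final identification ($I_F=I_M$ via the Jacobian criterion, Cauchy--Riemann, the standard fact about vanishing ideals of transversally cut-out submanifolds, and a partition of unity), which the paper leaves as a bare assertion ("which is precisely $\C^{\infty}((\Spec S)(\Cx),\R)$ when $S$ is smooth"). One small point to complete your globalization: the partition of unity must also cover points off $M=(\Spec S)(\Cx)$, where the local containment $I_M\subseteq I_F$ holds because some $(\Re f_j)^2+(\Im f_j)^2$ is a unit there, so $I_F$ is locally everything.
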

\begin{proof}
We can write $u$ as the composition of the forgetful functor $\C^{\infty}\Alg \to \CAlg (\R)$ with the tensor product functor $\ten_{\R} \Cx \co \C^{\infty}\Alg (\R) \to \C^{\infty}\Alg (\Cx)$.  

The categories  $\C^{\infty}\Alg$ and  $\CAlg(\R)$ are  monadic over the category of  real vector spaces, with  natural maps $\Symm_{\R}V \to \C^{\infty}(V^{\vee},\R)$ of the associated monads. Here, we regard $V^{\vee}$ as a pro-finite-dimensional vector space, so $\C^{\infty}(V^{\vee},\R)= \LLim_{\alpha}\C^{\infty}(V^{\vee}_{\alpha},\R) $ for  finite-dimensional subspaces $V_{\alpha}$ of $V$.
The forgetful functor thus has a left adjoint, which sends $\Symm_{\R}V$ to $ \C^{\infty}(V^{\vee},\R)$, and extends to all objects by passing to coequalisers.

The left adjoint to the functor $\ten_{\R}\Cx$ is given by Weil restriction of scalars, so sends $\Symm_{\Cx}U$ to $\Symm_{\R}U$ for any complex vector space $U$ (via either of the identifications $\Hom_{\R}(\Cx,\R) \cong \Cx$ or by $V\cong (V.\Gal(\Cx/\R))^{\Gal(\Cx/\R)}$). The left adjoint to the forgetful functor sends $\Symm_{\R}V$ to $ \C^{\infty}(V^{\vee},\R)$.

The composite $u^*$ thus satisfies $u^*\Cx[z_1, \ldots, z_n]= \C^{\infty}(\Cx^n,\R)$. If $S=\Cx[z_1, \ldots, z_n]/(f_1, \ldots, f_m)$, we have $u^*S=\C^{\infty}(\Cx^n,\R)/(\Re f_1,\Im f_1,  \ldots, \Re f_m, \Im f_m)$, which is precisely $\C^{\infty}((\Spec S)(\Cx),\R)$ when $S$ is smooth.
\end{proof}

The lemma motivates the following choice of notation:
\begin{definition}\label{cinftyfunctordef}
Given $Z= \Spec O(Z) \in  \Aff_{\Cx}$, write $\C^{\infty}(Z,\R):= u^*O(Z)$ and $Z_{\C^{\infty}}:= \Spec_{\infty} u^*O(Z)$.
\end{definition}
Beware that we cannot interpret $\C^{\infty}(Z,\R)$ as a ring of functions on $Z(\Cx)$ when $Z$ is not reduced. For instance, when $Z= \Spec \Cx[z]/z^2$ we have $\C^{\infty}(Z,\R)= \R[x,y]/(x^2-y^2, xy)$. 

\subsection{Presheaves and homotopy Kan extensions}

\begin{definition}
For any ring $k$, let $\Ch_k$ be the category of  (unbounded) chain complexes of $k$-modules. For any  category $I$, write $\Ch_k(I)$ for the category of presheaves in real chain complexes on $I$ (i.e. functors $I^{\op} \to \Ch_k$).
\end{definition}

As in \cite[Proposition \ref{smoothK-pshfmodelstr}]{smoothK}, the projective model structure of  \cite{bousfieldkan} adapts to give:
\begin{proposition}\label{pshfmodelstr}
 For any small category $I$, there is a  cofibrantly generated model structure on the category  $\Ch_k(I)$, with a morphism $f \co A \to B$ being a fibration (resp. weak equivalence) whenever the maps $f_i \co A(i) \to B(i)$ are surjections (resp. quasi-isomorphisms) for all $i \in I$.
\end{proposition}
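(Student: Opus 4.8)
The plan is to obtain this model structure by transfer along the evaluation functors from the standard cofibrantly generated projective model structure on $\Ch_k$ itself. Recall that $\Ch_k$ carries a cofibrantly generated model structure in which the weak equivalences are the quasi-isomorphisms and the fibrations are the degreewise surjections; its generating cofibrations $\mathcal{I}$ are the inclusions $S^{n-1}\to D^n$, where $D^n$ is the contractible complex $k\xra{\id}k$ in degrees $n,n-1$ and $S^{n-1}$ is $k$ concentrated in degree $n-1$, and its generating trivial cofibrations $\mathcal{J}$ are the maps $0\to D^n$. I would first fix these two sets.

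For each object $i\in I$, the evaluation functor $\mathrm{ev}_i\co \Ch_k(I)\to\Ch_k$, $A\mapsto A(i)$, admits a left adjoint $F_i$, namely the free presheaf generated at $i$, given by $F_i(C)(j)=\bigoplus_{\Hom_I(j,i)}C$. I would then propose $\{F_i(g): i\in\Ob I,\ g\in\mathcal{I}\}$ and $\{F_i(g): i\in\Ob I,\ g\in\mathcal{J}\}$ as generating cofibrations and generating trivial cofibrations for $\Ch_k(I)$. By adjunction, a morphism $f$ of presheaves has the right lifting property against all $F_i(g)$ if and only if every $f_i\co A(i)\to B(i)$ has the right lifting property against all $g$; thus right lifting against the second set characterises the objectwise fibrations, i.e. the objectwise surjections, and against the first set the objectwise trivial fibrations.

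The model structure then follows from the standard recognition theorem for cofibrantly generated model categories, taking the weak equivalences to be the objectwise quasi-isomorphisms (which plainly satisfy two-out-of-three and are closed under retracts). Two hypotheses require checking. First, the domains of the proposed generators must be small relative to the relevant cell complexes; this is immediate because $\Ch_k(I)$ is locally presentable and each $F_i$ preserves small objects. Second — and this is the only step with genuine content — every relative $\{F_i(\mathcal{J})\}$-cell complex must be a weak equivalence.

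The main obstacle, such as it is, lies in this last verification, everything else being formal adjunction bookkeeping. The point is that each $\mathrm{ev}_j$ is itself a left adjoint, so it commutes with the pushouts and transfinite compositions that build cell complexes, and it sends each $F_i(g)$ to the coproduct $\bigoplus_{\Hom_I(j,i)}g$ of copies of a trivial cofibration of $\Ch_k$, which is again a trivial cofibration. Hence evaluating a relative $\{F_i(\mathcal{J})\}$-cell complex at any $j$ exhibits it as a transfinite composition of pushouts of trivial cofibrations in $\Ch_k$, and therefore as a trivial cofibration and in particular a quasi-isomorphism. So the cell complex is an objectwise weak equivalence, as needed, and the recognition theorem (adapting the projective structure of \cite{bousfieldkan} exactly as in \cite[Proposition \ref{smoothK-pshfmodelstr}]{smoothK}) yields the cofibrantly generated model structure with the stated fibrations and weak equivalences.
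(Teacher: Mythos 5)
Your proof is correct, and it is essentially the paper's own approach: the paper simply cites that the projective model structure of Bousfield--Kan (via the author's earlier paper) ``adapts'' to presheaves of chain complexes, and your transfer argument --- free/evaluation adjunctions $F_i \dashv \mathrm{ev}_i$, the generating sets $\{F_i(g)\}$, and the recognition theorem with the key check that relative $F_i(\mathcal{J})$-cell complexes are objectwise quasi-isomorphisms --- is precisely the standard proof underlying that citation. Nothing is missing; you have just made explicit the details the paper delegates to the references.
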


\begin{definition}\label{NkXdef}
 For any object $X \in I$, we write $k.X \in \Ch_k(I)$ for the presheaf sending $Y$ to the free $k$-module $k.\Hom(Y,X)$ generated by the set $\Hom(Y,X)$.

Given a simplicial object $X \in I^{\Delta^{\op}}$, we write $Nk.X \in \Ch_k(I)$ for the Dold--Kan normalisation of the simplicial presheaf $k.X$ in $k$-modules.
\end{definition}

Note that $Nk.X$ is cofibrant because it is a direct summand of a complex $k.X_0 \la k.X_1 \la k.X_2 \la \ldots$ which can be constructed as a composition of pushouts of generating cofibrations $k.X_i[-i-1] \to \cone(k.X_i)[-i-1]$.

Since we want to consider presheaves on the categories $\Aff_{\Cx}$ and $\C^{\infty}\Aff$, we will now take the lazy, but notationally more convenient, route of assuming that $\Ch_k$ lives in a larger universe, so contains limits and colimits indexed by categories such as these.
The following lemmas show that  that assumption is unnecessary, and that all our presheaves can be defined on genuinely small categories:

\begin{lemma}\label{indlemma}
For any small category $I$, the  restriction functor
$ \iota^* \co \Ch_k(\pro(I)) \to \Ch_k(I)$ coming from the 
inclusion functor $\iota \co I \to \pro(I)$ has a left adjoint $\iota_!$. 
The functor $\iota_!$ gives an equivalence between $\Ch_k(I)$ and the full subcategory of $\Ch_k(\ind(I))$ consisting of presheaves $F$ which are locally of finite presentation in the sense that
\[
 \LLim_{\alpha}F(X_{\alpha}) \to F(\Lim_{\alpha}X_{\alpha}) 
\]
is an isomorphism for all filtered inverse systems $\{X_{\alpha}\}$ in $I$.
 \end{lemma}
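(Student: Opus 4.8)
The plan is to recognise the statement as the universal property of the free cocompletion under filtered colimits, applied with target category $\Ch_k$. First I would unwind the definitions. Since $\pro(I)^{\op} \cong \ind(I^{\op})$, a presheaf on $\pro(I)$ is the same datum as a covariant functor $\ind(I^{\op}) \to \Ch_k$, and under this identification $\iota^*$ becomes restriction along the fully faithful embedding $j \co I^{\op} \into \ind(I^{\op})$. A filtered inverse system $\{X_{\alpha}\}$ in $I$ is precisely a filtered colimit $\LLim_{\alpha} X_{\alpha}$ in $\ind(I^{\op})$, so the locally-finite-presentation condition on a presheaf $F$ says exactly that the associated functor $\ind(I^{\op}) \to \Ch_k$ preserves filtered colimits. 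The whole lemma thus asserts that restriction along $j$ is an equivalence from the filtered-cocontinuous functors onto $\Ch_k(I)$, with inverse the left Kan extension.

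Next I would construct the adjoint $\iota_!$ as the pointwise left Kan extension along $j$. For $Y \in \ind(I^{\op})$ the value $\iota_! F(Y)$ is the colimit of $F$ over the comma category $(j \da Y)$; the key technical input is that for an ind-object $Y$ this comma category is essentially small and filtered, which is one of the standard characterisations of ind-objects. Hence the defining colimit is a small filtered colimit and exists in $\Ch_k$, so $\iota_!$ is defined and is left adjoint to $\iota^*$, giving the first assertion of the lemma. The same computation yields $\iota_! F(\LLim_{\alpha} X_{\alpha}) \cong \LLim_{\alpha} F(X_{\alpha})$, so every presheaf in the image of $\iota_!$ is locally of finite presentation.

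It then remains to see that $\iota_!$ is fully faithful with essential image exactly the locally-finitely-presented presheaves. Full faithfulness follows because $j$ is fully faithful, so the unit $\id \to \iota^*\iota_!$ is an isomorphism (a left Kan extension along a fully faithful functor restricts back to the original functor). For the essential image, one inclusion is the previous paragraph; conversely, if $G$ is locally of finite presentation then the counit $\iota_!\iota^* G \to G$ is an isomorphism, since both $\iota_!\iota^* G$ and $G$ are filtered-cocontinuous functors on $\ind(I^{\op})$ that agree on the generating subcategory $I^{\op}$, and every object of $\ind(I^{\op})$ is canonically a filtered colimit of objects of $I^{\op}$.

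I expect the only real obstacle to be the set-theoretic bookkeeping required to keep this honest while $\pro(I)$ is a large category: one must verify that the comma categories $(j \da Y)$ are genuinely small and filtered, so that the colimits defining $\iota_!$ are small filtered colimits already present in $\Ch_k$ and no essential appeal to the enlarged universe is needed. Everything else is formal manipulation of Kan extensions and the universal property of $\ind(I^{\op})$.
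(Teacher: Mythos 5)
Your proof is correct and is in substance the same as the paper's: both construct $\iota_!$ as a left Kan extension, verify the key formula $(\iota_!F)(\Lim_{\alpha}X_{\alpha})\cong \LLim_{\alpha}F(X_{\alpha})$ using the defining hom-formula for pro- (equivalently ind-) objects, and conclude that the unit $\id \to \iota^*\iota_!$ is an isomorphism while the counit $\iota_!\iota^*G \to G$ is an isomorphism exactly when $G$ is locally of finite presentation. The only differences are presentational: you dualise to $\ind(I^{\op})\cong \pro(I)^{\op}$ and compute the Kan extension pointwise as filtered colimits over the (small, filtered) comma categories, whereas the paper stays in $\pro(I)$ and computes $\iota_!$ from the presentation of a presheaf as a colimit of the generators $k.Y$.
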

\begin{proof}
Because $F \in \Ch_k(I)$ is given by $ \LLim_{Y \in (I \da F)} k.Y$, where the colimit is indexed by the comma category of morphisms $k.Y \to F$ in $\Ch_k(I)$, the functor $\iota_!$ can be defined by setting
\[
 \iota_!F = \LLim_{Y \in (I \da F)} k.\iota(Y).
\]

For a filtered inverse system $X= \{X_{\alpha}\}_{\alpha}$ in $\pro(I)$ and $Y \in I$, we necessarily have $\iota_!k.Y= k.(\iota Y)$, and so
\[
 (\iota_!k.Y)(X)= \LLim_{\alpha} (\iota_!k.Y)(X_{\alpha})= \LLim_{\alpha} k.Y(X_{\alpha}).
\]
We now just observe that any $F \in \Ch_k(I)$ can be written as a colimit of objects of the form $k.Y$, so the maps
\[
\LLim_{\alpha} F(X_{\alpha}) \la \LLim_{\alpha} (\iota_!F)(X_{\alpha})\to (\iota_!F)(X)
\]
 are isomorphisms.

Thus we have $(\iota_!F)( \{X_{\alpha}\}_{\alpha}) \cong \LLim_{\alpha} F(X_{\alpha})$, so $\iota^*\iota_!$ is the identity, while the counit $\iota_!\iota^*G \to G$ of the adjunction is an isomorphism if and only if $G$ is locally of finite presentation.
\end{proof}

\subsubsection{Presheaves on complex and $\C^{\infty}$-affines}
 
 Using  the equivalences $\pro(\Aff_{\Cx,\fin}) \simeq \Aff_{\Cx} $ and $\pro(\C^{\infty}\Aff_{\fin}) \simeq \C^{\infty}\Aff$, and noting that filtered colimits preserve quasi-isomorphisms of chain complexes, the lemma above gives:

\begin{lemma}\label{procatlemma}
 There are left Quillen functors
\begin{align*}
  \Ch_k(d\Aff_{\Cx,\fin})  \xra{ \iota_!} \Ch_k(\Aff_{\Cx})\\
\Ch_k(\C^{\infty}\Aff_{\fin})  \xra{\iota_!} \Ch_k(\C^{\infty}\Aff )
\end{align*}
whose essential images in the homotopy category consist of presheaves $F$ which are locally of finite presentation 
 in the sense that
\[
 \LLim_{\alpha}F(X_{\alpha}) \to F(\Lim_{\alpha}X_{\alpha}) 
\]
is a  quasi-isomorphism for all filtered inverse systems $\{X_{\alpha}\}$. The functor $\iota_!$ preserves weak equivalences and is full and faithful on the associated $\infty$-categories.
The right adjoint of $\iota_!$ is given by restriction $\iota^*$.
\end{lemma}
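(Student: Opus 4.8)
The plan is to deduce everything from Lemma~\ref{indlemma} applied to the (essentially small) indexing categories $I=\Aff_{\Cx,\fin}$ and $I=\C^{\infty}\Aff_{\fin}$, transporting along the equivalences $\pro(\Aff_{\Cx,\fin}) \simeq \Aff_{\Cx}$ and $\pro(\C^{\infty}\Aff_{\fin}) \simeq \C^{\infty}\Aff$ recorded above. Lemma~\ref{indlemma} already supplies the adjunction $\iota_! \dashv \iota^*$, the explicit formula $(\iota_!F)(\{X_{\alpha}\}) \cong \LLim_{\alpha} F(X_{\alpha})$, the strict identity $\iota^*\iota_! \cong \id$, and the description of the counit $\iota_!\iota^* G \to G$ as the canonical comparison $\LLim_{\alpha} G(X_{\alpha}) \to G(\Lim_{\alpha} X_{\alpha})$. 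The remaining work is purely to lift these $1$-categorical facts to the model structure of Proposition~\ref{pshfmodelstr}.

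First I would check that $\iota_!$ is left Quillen by verifying that its right adjoint $\iota^*$ is right Quillen. Since fibrations and weak equivalences in both presheaf categories are detected objectwise (Proposition~\ref{pshfmodelstr}), and $\iota^*$ is restriction along the inclusion $\iota\co I \into \pro(I)$, it manifestly carries objectwise surjections and objectwise quasi-isomorphisms to the same, so it preserves fibrations and trivial fibrations. The same objectwise description shows $\iota^*$ preserves all weak equivalences, so $\oR\iota^* = \iota^*$.

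Next I would show that $\iota_!$ preserves weak equivalences, which is where the only genuine input enters: the exactness of filtered colimits on $\Ch_k$. Given a quasi-isomorphism $F \to G$ in $\Ch_k(I)$, the induced map $\iota_!F \to \iota_!G$ is $F\to G$ itself on objects of $I$, and on a pro-object $\{X_{\alpha}\}$ it is the filtered colimit $\LLim_{\alpha}F(X_{\alpha}) \to \LLim_{\alpha}G(X_{\alpha})$ of quasi-isomorphisms, hence a quasi-isomorphism. Thus $\iota_!$ is homotopical and $\oL\iota_! = \iota_!$; in particular no cofibrant replacement is needed. Because both $\iota_!$ and $\iota^*$ descend directly to the $\infty$-categorical localisations and the unit $\id \to \iota^*\iota_!$ is a (strict) isomorphism, the induced left adjoint is full and faithful, with right adjoint $\iota^*$.

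Finally, for the essential image I would evaluate the derived counit at a pro-object: it is exactly the map $\LLim_{\alpha} G(X_{\alpha}) \to G(\Lim_{\alpha} X_{\alpha})$, which is a weak equivalence for every filtered inverse system if and only if $G$ is locally of finite presentation in the stated quasi-isomorphism sense. Since the essential image of a fully faithful left adjoint consists precisely of those $G$ for which the counit is an equivalence, this identifies the essential image. The main obstacle, and really the only substantive point, is ensuring that the strict local-finite-presentation criterion of Lemma~\ref{indlemma} upgrades to the quasi-isomorphism criterion; both the homotopical behaviour of $\iota_!$ and the derived form of the counit rest on the single fact that filtered colimits of chain complexes preserve quasi-isomorphisms.
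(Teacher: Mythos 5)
Your proposal is correct and takes essentially the same route as the paper, whose entire proof consists of invoking Lemma \ref{indlemma} through the equivalences $\pro(\Aff_{\Cx,\fin})\simeq\Aff_{\Cx}$ and $\pro(\C^{\infty}\Aff_{\fin})\simeq\C^{\infty}\Aff$ together with the observation that filtered colimits preserve quasi-isomorphisms of chain complexes. Your write-up merely makes explicit the steps the paper leaves implicit: that $\iota^*$ is right Quillen for the objectwise model structures, that the colimit formula makes $\iota_!$ homotopical, and that the (derived) unit and counit identifications yield full faithfulness and the characterisation of the essential image.
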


Now observe that the categories $\Aff_{\Cx,\fin} $ and $\C^{\infty}\Aff_{\fin}$ are equivalent to small categories, since each object has a finite set of generators and relations. We can therefore make sense of $\Ch_k(\Aff_{\Cx,\fin})$ and $\Ch_k(\C^{\infty}\Aff)$ without worrying about universes, as any construction we consider can be reduced to a construction in the original universe.

\begin{lemma}\label{rightadjointlemma}
For the functor $(-)_{\C^{\infty}}$ of Definition \ref{cinftyfunctordef}, and any $ F \in \Ch_k(\Aff_{\Cx,\fin})$, the presheaf $ \oL_!F_{\C^{\infty}} := \oL((-)_{\C^{\infty}})_!F \in\Ch_k(\C^{\infty}\Aff_{\fin}) $ is given (up to canonical weak equivalence) by 
\[
 \iota_!\oL_!F_{\C^{\infty}}\simeq U^* \iota_!F \in \Ch_k(\C^{\infty}\Aff),
\]
for the forgetful functor $U \co\C^{\infty}\Aff \to \Aff_{\Cx}$ given by $U\Spec_{\infty}A := \Spec (A\ten_{\R}\Cx)$.
\end{lemma}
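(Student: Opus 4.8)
The plan is to deduce everything from the defining adjunction $u^*\dashv u$ of Lemma~\ref{ustarlemma} and then transport it through colimits and a cofibrant replacement. On opposite categories that adjunction reads $U\dashv (-)_{\C^{\infty}}$, so for $X\in\Aff_{\Cx,\fin}$ and $Z=\Spec_{\infty}A\in\C^{\infty}\Aff$ we have the natural chain of identifications
\[
 \Hom_{\C^{\infty}\Aff}(Z,X_{\C^{\infty}})=\Hom_{\C^{\infty}\Alg}(u^*O(X),A)=\Hom_{\CAlg(\Cx)}(O(X),A\ten_{\R}\Cx)=\Hom_{\Aff_{\Cx}}(UZ,X).
\]
First I would record that $(-)_{\C^{\infty}}$ genuinely restricts to a functor $\Aff_{\Cx,\fin}\to\C^{\infty}\Aff_{\fin}$: the proof of Lemma~\ref{ustarlemma} shows $u^*\Cx[z_1,\ldots,z_n]=\C^{\infty}(\Cx^n,\R)$ is free, hence finitely presented, and $u^*$ carries the finite colimits cutting out relations to finite colimits, so preserves finite presentation. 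This makes the left Kan extension $((-)_{\C^{\infty}})_!$ and its left derived functor $\oL((-)_{\C^{\infty}})_!$ along $(-)_{\C^{\infty}}$ well defined, the latter because the restriction $((-)_{\C^{\infty}})^*$ preserves objectwise fibrations and weak equivalences, so $((-)_{\C^{\infty}})_!$ is left Quillen for the structures of Proposition~\ref{pshfmodelstr}.

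Next I would carry out the key computation on representables. Left Kan extension sends the representable $k.X$ to $k.(X_{\C^{\infty}})$, and $\iota_!$ sends representables to representables (as in the proof of Lemma~\ref{indlemma}), so $\iota_!((-)_{\C^{\infty}})_!(k.X)$ is the presheaf $Z\mapsto k.\Hom_{\C^{\infty}\Aff}(Z,X_{\C^{\infty}})$. On the other side $\iota_!(k.X)$ is the presheaf represented by $X$ on $\Aff_{\Cx}$, so $U^*\iota_!(k.X)$ is $Z\mapsto k.\Hom_{\Aff_{\Cx}}(UZ,X)$. The displayed adjunction identity matches these two presheaves, naturally in $X$.

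I would then extend from representables to all of $\Ch_k(\Aff_{\Cx,\fin})$ by density. Both $F\mapsto\iota_!((-)_{\C^{\infty}})_!F$ and $F\mapsto U^*\iota_!F$ preserve all colimits: the former is a composite of left adjoints, and $U^*$ preserves colimits because restriction of presheaves along $U$ is computed objectwise. Writing an arbitrary $F$ as the canonical colimit $\LLim_{Y\in(\Aff_{\Cx,\fin}\da F)}k.Y$ of representables (again as in the proof of Lemma~\ref{indlemma}), the natural isomorphism on representables yields a natural isomorphism of underived functors $\iota_!((-)_{\C^{\infty}})_!F\cong U^*\iota_!F$.

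Finally, to reach the derived statement I would evaluate at a cofibrant replacement $\tilde F\xra{\sim}F$. Since cofibrant objects are built cellularly from representables $k.X$, the left Quillen functor $((-)_{\C^{\infty}})_!$ requires no further correction there, so $\oL_!F_{\C^{\infty}}=((-)_{\C^{\infty}})_!\tilde F$; the underived isomorphism applied to $\tilde F$ then gives $\iota_!\oL_!F_{\C^{\infty}}\cong U^*\iota_!\tilde F$. Because $\iota_!$ preserves weak equivalences (Lemma~\ref{procatlemma}) and $U^*$ does too (weak equivalences being objectwise quasi-isomorphisms, which $U^*$ merely reindexes), the map $\tilde F\to F$ induces $U^*\iota_!\tilde F\simeq U^*\iota_!F$, yielding the asserted canonical weak equivalence. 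The main obstacle I anticipate is precisely this last coordination: one must be confident that $\oL((-)_{\C^{\infty}})_!$ is computed by the underived Kan extension on a representable-cellular cofibrant replacement, and that the isomorphism may be transported from $\tilde F$ back to $F$; this is exactly what the two weak-equivalence-preservation facts are there to guarantee, so isolating and citing them is the crux of the argument.
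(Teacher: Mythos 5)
Your proposal is correct and takes essentially the same route as the paper: the paper's proof simply cites the abstract fact that, because $(-)_{\C^{\infty}}$ is right adjoint to $U$, left Kan extension along $(-)_{\C^{\infty}}$ coincides with restriction $U^*$ (so that $\oL((-)_{\C^{\infty}})_! = U^* = \oL U^*$), together with the observations that $\iota_!$ preserves weak equivalences and that $(-)_{\C^{\infty}}$ restricts to the finitely presented subcategories and commutes with $\iota$. Your verification on representables followed by the density/colimit argument and the cofibrant-replacement step is precisely an unpacking of that cited fact, so the two proofs agree in substance.
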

\begin{proof}
 Since $(-)_{\C^{\infty}}$ is right adjoint to $U$ and $\oL U^*= U^*$, we have $U^*= \oL((-)_{\C^{\infty}})_! $. Because $\iota_!$ preserves weak equivalences, we have $\iota_!= \oL\iota_!$.  We then just note that  $(-)_{\C^{\infty}}$ restricts to a functor from $\Aff_{\Cx,\fin}$ to $\C^{\infty}\Aff_{\fin}$, and   commutes with $\iota$.
\end{proof}

\subsubsection{Smooth completions}\label{derivedcompletionsn}


\begin{definition}
Let $\pro(\FD\Vect_{\R})$ be the category of pro-finite-dimensional real vector spaces. This is equivalent to the opposite category $\Vect_{\R}^{\op}$ of the category of real vector spaces, by dualisation. We write $\Ch( \pro(\FD\Vect_{\R}))$ for the category of chain complexes in $ \pro(\FD\Vect_{\R})$, and give it the model structure coming from the equivalence $\Ch( \pro(\FD\Vect_{\R})) \simeq \Ch_{\R}^{\op}$.

Given $V= \{V_{\alpha}\}_{\alpha} \in \pro(\FD\Vect_{\R})$ and $U \in \Ch_{\R}$, we write $V\hten_{\R} U:= \Lim_{\alpha}(V_{\alpha}\ten_{\R} U)$; if $V^{\vee} \in \Ch_{\R}$ is the continuous dual of $V$, this means that $ V\hten_{\R} U$  is given by the $\Hom$-complex $\HHom_{\R}(V^{\vee},U)$ with $\HHom_{\R}(V^{\vee},U)_n= \prod_i\Hom_{\R}(V^{\vee}_i,U_{n-i}) $.
\end{definition}

\begin{definition}
Define $\Sm \co \Ch( \pro(\FD\Vect_{\R}))\to \Ch_{\Q}(\C^{\infty}\Aff)$ to be the functor given by 
\[
 \Sm(V)(Z):= V\hten_{\R}O^{\infty}(Z).
\]

We also write $\Sm$ for the functor $(-)_{\C^{\infty}}^*\Sm \co \Ch( \pro(\FD\Vect_{\R}))\to \Ch_{\Q}(\Aff_{\Cx})$ given by
\[
 \Sm(V)(Z):= V\hten_{\R}\C^{\infty}(Z,\R).
\]
\end{definition}

\begin{lemma}\label{Smlemma}
 The functors $\Sm \co \Ch( \pro(\FD\Vect_{\R}))\to \Ch_{\Q}(\C^{\infty}\Aff) $ and $\Sm \co \Ch( \pro(\FD\Vect_{\R}))\to \Ch_{\Q}(\Aff_{\Cx}^{}) $ are right Quillen.
\end{lemma}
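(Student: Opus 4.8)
The plan is to check the two defining conditions for a right Quillen functor: existence of a left adjoint, and preservation of fibrations and acyclic fibrations. I would first reduce to the case of the target $\Ch_\Q(\C^{\infty}\Aff)$. Indeed, the functor valued in $\Ch_\Q(\Aff_{\Cx})$ is by construction the composite $(-)_{\C^{\infty}}^*\Sm$, where $(-)_{\C^{\infty}}^*$ is restriction along $(-)_{\C^{\infty}}\co \Aff_{\Cx}\to \C^{\infty}\Aff$. For the projective model structures of Proposition \ref{pshfmodelstr}, restriction preserves the objectwise classes of fibrations and weak equivalences and has left Kan extension as a left adjoint, so it is right Quillen; since a composite of right Quillen functors is right Quillen, the second statement will follow from the first.

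For the left adjoint, I would use the identification $\Ch(\pro(\FD\Vect_{\R})) \simeq \Ch_{\R}^{\op}$, under which $\Sm(V)(Z) = \HHom_{\R}(V^{\vee}, O^{\infty}(Z))$ with $V^{\vee}\in \Ch_{\R}$. Since $\HHom_{\R}(-, O^{\infty}(Z))$ sends colimits in its first variable to limits and limits of presheaves are computed objectwise, $\Sm$ preserves all limits. The source $\Ch_{\R}^{\op}$ is complete, well-powered, and has a cogenerating set dual to a set of generators of $\Ch_{\R}$, so the special adjoint functor theorem provides a left adjoint $L$ (concretely given by an end over $Z$).

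The substance of the proof is the compatibility with the model structures, which I would extract from the fact that $\R$ is a field. Because $O^{\infty}(Z)$ is concentrated in degree $0$, the definition of $\HHom$ shows that $\Sm(V)(Z)$ is obtained by applying $\Hom_{\R}(-, O^{\infty}(Z))$ to $V^{\vee}$ degreewise. Over the field $\R$ this functor is exact, so it sends monomorphisms to epimorphisms and preserves quasi-isomorphisms (as $H^n\Hom_{\R}(V^{\vee},O^{\infty}(Z)) = \Hom_{\R}(H_n(V^{\vee}), O^{\infty}(Z))$). Under dualisation the model structure on the source identifies a fibration $V\to W$ with a map whose dual $W^{\vee}\to V^{\vee}$ is a monomorphism, and an acyclic fibration additionally with a quasi-isomorphism. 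Applying the observation degreewise and objectwise, $\Sm(V)\to \Sm(W)$ is an objectwise surjection for a fibration, and an objectwise surjective quasi-isomorphism for an acyclic fibration; these are exactly the fibrations and acyclic fibrations in the target. Hence $\Sm$ is right Quillen.

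The step I expect to demand the most care is this last one: preservation of fibrations and of acyclicity rests essentially on working over the field $\R$, so that every monomorphism splits degreewise and $\Hom_{\R}(-, O^{\infty}(Z))$ is exact and commutes with homology. Over a general base ring neither holds and the argument would break down; keeping track of the variance introduced by dualisation, and of the passage between the $\Q$-linear target and the $\R$-linear values, is where the bookkeeping must be done carefully.
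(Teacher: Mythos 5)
Your proof is correct, and it verifies exactly the two conditions the paper's proof does, but it handles each one differently, so a comparison is worthwhile. For the existence of the left adjoint, the paper does not appeal to an adjoint functor theorem at all: it simply writes down $\Sm^*$ explicitly as a coend, sending a presheaf $F$ to $\int^{Z \in \C^{\infty}\Aff} \HHom_{\Q}(F(Z), O^{\infty}(Z))^{\vee}$ (and similarly over $\Aff_{\Cx}$), i.e.\ the enriched left Kan extension formula, which is the end over $Z$ that you mention only parenthetically. This is more than a stylistic choice: the explicit description is what the paper actually uses downstream, e.g.\ in the homotopy end computation of Corollary \ref{blanccor0} and in identifying $\oL\Sm^*$ on locally finitely presented presheaves, so the constructive formula carries more weight there than a pure existence statement from SAFT. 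Note also that both routes face the same size issue --- the coend is indexed by the large category $\C^{\infty}\Aff$, and SAFT needs a locally small target --- and both are covered by the universe convention and Lemma \ref{procatlemma} that the paper sets up beforehand, so neither argument has an advantage on that front. Your remaining two departures are expansions rather than changes of substance: the paper treats the two functors in parallel instead of factoring the $\Aff_{\Cx}$ case through the right Quillen restriction along $(-)_{\C^{\infty}}$ (your reduction is clean and correct), and it declares preservation of fibrations and trivial fibrations to be ``immediate'', which is precisely the field-theoretic argument you spell out: cofibrations in $\Ch_{\R}$ are the monomorphisms (projective and injective model structures coincide over a field), every such monomorphism is degreewise split, and $\Hom_{\R}(-,O^{\infty}(Z))$ is exact and commutes with homology, so duals of (trivial) cofibrations go to objectwise surjections (and quasi-isomorphisms). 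In short: your argument is a valid, more detailed proof; the paper's is terser but yields the explicit adjoint that the rest of the paper relies on.
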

\begin{proof}
 It is immediate that both functors preserve fibrations and trivial fibrations. Left adjoints 
$\Sm^*$
send presheaves $F$ to the coends
\begin{align*}
 Z &\mapsto \int^{Z \in \C^{\infty}\Aff} \HHom_{\Q}(F(Z), O^{\infty}(Z))^{\vee}. \\
Z &\mapsto \int^{Z \in \Aff_{\Cx}^{}} \HHom_{\Q}(F(Z), \C^{\infty}(Z,\R))^{\vee}. 
\end{align*}
\end{proof}
%
%

Lemmas \ref{rightadjointlemma} and \ref{procatlemma} imply that for $F \in \Ch_{\Q}(\Aff_{\Cx})$ locally of finite presentation, the complexes
\[
 \oL\Sm^*F, \quad  \oL\Sm^*(F|_{d\Aff_{\Cx,\fin}},),\quad \oL\Sm^*(F\circ U),\quad\text{and}\quad \oL\Sm^*((F\circ U)|_{d\C^{\infty}\Aff_{\fin}})
\]
are all equivalent. In particular, $ \oL\Sm^*F$ is determined by the functor $F(- \ten_{\R}\Cx)$ on  $\C^{\infty}$-rings.

\subsection{$K$-theory and cyclic homology  presheaves}

\subsubsection{$K$-theory presheaves}


\begin{definition}
Given a  spectrum $Y=\{Y^n\}$ and a ring $k$, write $Y_k$ for the chain complex
\[
 Y_k:=  \LLim_n N \bar{\CC}_{\bt}(Y^n,k))[n]
\]
of $k$-modules, where $\bar{\CC}$ denotes reduced chains and $N$ the Dold--Kan normalisation, and the maps
\[
 N\bar{\CC}_{\bt}(Y^n,k)\to N\bar{\CC}_{\bt}(Y^{n+1},k)[1]
\]
combine the structure map 
\[
 (N^{-1}k[-1])\ten_k \bar{\CC}_{\bt}(Y^n,k)\cong \bar{\CC}_{\bt}(S^1 \wedge Y^n,k)  \to\bar{\CC}_{\bt}(Y^{n+1},k)
\]
with the Eilenberg--Zilber shuffle map
\[
 k[-1]\ten_k N\bar{\CC}_{\bt}(Y^n,k) \to N( (N^{-1}k[-1])\ten_k \bar{\CC}_{\bt}(Y^n,k)). 
\]
\end{definition}
This construction is left adjoint to the Eilenberg--MacLane functor $H$ from $\Ch_k$ to spectra (cf. Definition \ref{Hdef}).

The following definition is taken from \cite{blancToplKTh}:
\begin{definition}
 Given a  dg category $\cA$ over $\Cx$, define the functor  $\uline{K}(\cA)$ from  $\CAlg_{\Cx}$ to spectra by 
\[
 \uline{K}(\cA)(B) := K(\cA\ten_{\Cx}B),
\]
 the non-connective $K$-theory of $\cA\ten_{\Cx}B$. 
Given a derived $\Cx$-scheme $X$, we write $ \uline{K}(X):= \uline{K}(\per_{dg}(X))$, 
for $\per_{dg}(X)$ the dg category of perfect complexes of $\sO_X$-modules on $X$.
\end{definition}

Combining these definitions, we obtain a presheaf $\uline{K}(\cA)_{\Q} \in  \Ch_{\Q}(d\Aff_{\Cx}) $ which is locally of finite presentation.

\begin{definition}\label{RCinftydef}
 Given a functor $F$ from $ \CAlg_{\Cx}$ to spectra, locally of finite presentation, define
\[
 \oR\C^{\infty}(F,\R):= (\oL\Sm^*(F_{\Q}))^{\vee},
\]
for $F_{\Q} \in \Ch_{\Q}(\Aff_{\Cx})$ as above. 

Given a functor $F$ from $ \CAlg_{\Cx}$ to simplicial sets, locally of finite presentation, define
\[
 \oR\C^{\infty}(F,\R):= \oR\C^{\infty}(\Sigma^{\infty}F_+,\R)=  (\oL\Sm^*(N\Q.F))^{\vee},
\]
for $N\Q.F$ as in Definition \ref{NkXdef}.

We refer to these as the complexes of  smooth functions on the presheaves $F$.
\end{definition}

\begin{definition}
 Given a functor $F$ from $ \C^{\infty}\Alg$ to spectra, locally of finite presentation, define
\[
 \oR O^{\infty}(F):= (\oL\Sm^*(F_{\Q}))^{\vee},
\]
for $F_{\Q} \in \Ch_{\Q}(\C^{\infty}\Aff)$ as above. 

Given a functor $F$ from $ \C^{\infty}\Alg$ to simplicial sets, locally of finite presentation, define
\[
 \oR O^{\infty}(F):= \oR O^{\infty}(\Sigma^{\infty}F_+)= (\oL\Sm^*(N\Q.F))^{\vee}.
\]

We refer to these as the complexes of  functions on the presheaves $F$.
\end{definition}

Equivalently, the dual functors $\oL O^{\infty}(-)^{\vee} \co \Ch_{\Q}(\C^{\infty}\Aff) \to \Ch(\pro(\FD\Vect_{\R}))$ and $\oL\C^{\infty}(-,\R)^{\vee}  \co \Ch_{\Q}(\Aff_{\Cx}) \to \Ch(\pro(\FD\Vect_{\R}))$  are the  derived enriched left Kan extensions of $  O^{\infty}(-)^{\vee}$  and $\C^{\infty}(-,\R)^{\vee} $ along the spectral Yoneda embeddings of $\C^{\infty}\Aff$ and $\Aff_{\Cx}$.

Our primary object of study will be $\oR\C^{\infty}(\uline{K}(\cA),\R)$, the complex of  smooth functions on the $K$-theory presheaf of $\cA$.

\subsubsection{Cyclic homology presheaves}

\begin{definition}
 Given a commutative ring $k$ and a dg category $\cA$ over $k$, write $\HC^k(\cA)$ for the chain complex associated to  cyclic homology  of $\cA$ over $k$. 
\end{definition}

\begin{definition}
 Given a  dg category $\cA$ over $\Cx$, define the functor  $\uline{HC}^{\Q}(\cA)\in \Ch_{\Q}(\Aff_{\Cx})$ by 
$\uline{HC}^{\Q}(\cA)(Z):= \HC^{\Q}(\cA\ten_{\Cx}O(Z))$.  Given a derived $\Cx$-scheme $X$, we write $ \uline{\HC}^{\Q}(X):= \uline{\HC}^{\Q}(\per_{dg}(X))$, for $\per_{dg}(X)$ the dg category of perfect complexes of $\sO_X$-modules on $X$.
\end{definition}

\section{de Rham presheaves and Blanc's semi-topological $K$-theory}\label{blancsn}

\begin{definition}
 Given $B \in \CAlg_{\Cx}$, define $B^{\red}$ to be the quotient of $B$ by its nilradical.
\end{definition}

The following definition is due to Carlos Simpson in \cite{simpsonHtpy}:

\begin{definition} Given a presheaf $F$ on $\Aff_{\Cx}$, define the de Rham presheaf  $F_{\dR}$ of $F$ by setting 
\[
 F_{\dR}(B):= F(B^{\red}).
\]
\end{definition}

\begin{proposition}\label{dRprop}
 For any affine $\Cx$-scheme $Z$ of finite type, the complex $\oR\C^{\infty}(Z_{\dR},\R)$ is naturally quasi-isomorphic to the derived global sections $\oR\Gamma(Z(\Cx)_{\an}, \R)$ of the space $Z(\Cx)$ with the analytic topology.
\end{proposition}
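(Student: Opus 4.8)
The plan is to present the de Rham presheaf $Z_{\dR}$ as a homotopy colimit of representables coming from the infinitesimal groupoid of the diagonal, and then to recognise the resulting cosimplicial complex of smooth functions as the real $\C^{\infty}$ de Rham complex of $Z(\Cx)$, to which the classical de Rham theorem applies. Since $Z_{\dR}(B)=Z(B^{\red})$ depends only on $Z^{\red}$, and $Z(\Cx)_{\an}=Z^{\red}(\Cx)_{\an}$ as topological spaces, I would first reduce to the case where $Z$ is reduced, and indeed smooth: for singular $Z$ I would embed $Z \into \bA^N$ with $\bA^N$ smooth affine and replace the diagonal below by the completion of $\bA^N$ along $Z$, so that the analytic input becomes a $\C^{\infty}$ tubular-neighbourhood retraction of $Z(\Cx)$ inside $\Cx^N$ in the style of Grothendieck's infinitesimal comparison. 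For smooth $Z$, the surjection $Z(B) \onto Z(B^{\red})$ exhibits $Z_{\dR}$ as the quotient of $Z$ by the formal equivalence relation $\{(z,z')\s z\equiv z' \bmod \mathrm{nil}(B)\}$, which is represented by the formal completion $\widehat{Z\times Z}_{\Delta}$ of $Z\times Z$ along the diagonal. Its nerve is the simplicial presheaf $N_{\bt}$ with $N_n=\widehat{Z^{n+1}}_{\Delta}$, and since the nerve of an equivalence relation is homotopy-discrete with $\pi_0$ the quotient, $|N_{\bt}| \xra{\sim} Z_{\dR}$.

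Because $\oR\C^{\infty}(-,\R)=(\oL\Sm^*(N\Q.(-)))^{\vee}$ is built from the left Quillen functor $\oL\Sm^*$ of Lemma \ref{Smlemma} followed by the dualising equivalence $\Ch(\pro(\FD\Vect_{\R}))\simeq \Ch_{\R}^{\op}$, it carries homotopy colimits of presheaves to homotopy limits of complexes; applied to the realisation above this gives
\[
 \oR\C^{\infty}(Z_{\dR},\R) \simeq \Tot\big( [n] \mapsto \oR\C^{\infty}(\widehat{Z^{n+1}}_{\Delta},\R)\big).
\]
Each term is then evaluated using $\C^{\infty}(W,\R)=u^*O(W)$ from Definition \ref{cinftyfunctordef}: writing the formal completion as the pro-system of infinitesimal neighbourhoods $\Spec O(Z^{n+1})/I_{\Delta}^{m+1}$, and using that $\oR\C^{\infty}$ sends this filtered system to the inverse limit $\Lim_m u^*(O(Z^{n+1})/I_{\Delta}^{m+1})$, each term becomes the completed ring of real smooth functions on $Z(\Cx)$ with formal coefficients in the $2n\dim_{\Cx}Z$ real infinitesimal directions normal to the diagonal.

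The content of the computation is then a $\C^{\infty}$ analogue of the Poincaré lemma for infinitesimal cohomology: the cosimplicial structure on this pro-system, with face maps given by the projections and degeneracies by the diagonal, should produce after totalisation the real smooth de Rham complex $\Omega^{\bt}_{\C^{\infty},\R}(Z(\Cx))$, the infinitesimal directions $t_i-t_0$ contributing the generators of the real cotangent bundle and the alternating sum of faces contributing the de Rham differential. The smooth de Rham theorem then identifies its cohomology with $\H^*(Z(\Cx)_{\an},\R)$, yielding $\oR\C^{\infty}(Z_{\dR},\R) \simeq \oR\Gamma(Z(\Cx)_{\an},\R)$; naturality in $Z$ is automatic since every step is functorial.

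The main obstacle is this middle identification of the totalisation with the smooth de Rham complex. Unlike the algebraic case, $u^*$ does not send a jet thickening to an exterior or polynomial algebra on the Kähler differentials: the computation $u^*\Cx[z]/z^2=\R[x,y]/(x^2-y^2,xy)$ recorded after Definition \ref{cinftyfunctordef} shows that extra relations appear, so one must verify that the associated graded of the $I_{\Delta}$-adic filtration, after applying $u^*$, is exactly $\Sym$ of the real conormal bundle, and that the Koszul-type differential assembles these into $\Omega^{\bt}_{\R}$. Controlling the completions (so that $\Lim_m$ and $\Tot$ commute and no $\lim^1$ terms intrude) and checking that the smooth Poincaré lemma survives completion --- equivalently, that passing to formal neighbourhoods does not alter the de Rham cohomology, which for singular $Z$ is precisely the $\C^{\infty}$ tubular-neighbourhood retraction alluded to above --- is the technical heart of the argument.
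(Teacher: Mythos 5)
Your architecture is the same as the paper's: resolve $Z_{\dR}$ by the \v{C}ech nerve of formal completions of powers of a smooth scheme along $Z$ (the paper works with an arbitrary closed immersion $Z \into Y$ with $Y$ smooth, which specialises to your diagonal $\widehat{Z\by Z}_{\Delta}$ when $Z$ is smooth and to your $Z \into \bA^N$ when it is not), apply $\oR\C^{\infty}(-,\R)$ to get a cosimplicial complex, and compare the totalisation with a de Rham complex computing $\H^*(Z(\Cx)_{\an},\R)$. Your verification that the nerve is equivalent to $Z_{\dR}$ (via surjectivity of $Y(B)\to Y(B^{\red})$ for formally smooth $Y$) matches the paper's. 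The problem is that what you flag as ``the main obstacle'' and ``the technical heart of the argument'' is exactly what you do not prove, and it is precisely what the paper's proof consists of. Two steps are missing. First, the identification of the totalisation $N_c\C^{\infty}(\tilde{Y}_Z,\R)$ with the completed smooth de Rham complex $\widehat{A^{\bt}(Y(\Cx),\R)}_{Z}$ (completion with respect to $\ker(\C^{\infty}(Y,\R)\to\C^{\infty}(Z,\R))$): the paper obtains this by applying the construction of \cite{Gr} to the pro-ring $\C^{\infty}(\tilde{Y}_Z)$, with the filtration by powers of $I=\ker(\C^{\infty}(\tilde{Y}_Z,\R)\to\C^{\infty}(\hat{Y}_Z,\R))$ mapping quasi-isomorphically to the brutal truncation filtration; your concern that $u^*$ of a nilpotent thickening is not $\Symm$ of the conormal module (the $\R[x,y]/(x^2-y^2,xy)$ phenomenon) is legitimate, and it is this pro-level comparison that absorbs it. Second, the identification of the completed de Rham complex with $\oR\Gamma(Z(\Cx)_{\an},\R)$: here your proposed ``$\C^{\infty}$ tubular-neighbourhood retraction of $Z(\Cx)$ inside $\Cx^N$'' is the wrong tool when $Z$ is singular, since $Z(\Cx)$ is then not a manifold and no smooth retraction need exist. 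The paper instead argues sheaf-theoretically: the proof of \cite[Theorem IV.1.1]{hartshorneDRCoho} generalises from the holomorphic to the smooth setting to show that $\R_{Z(\Cx)} \to \widehat{\sA^{\bt}(Y(\Cx),\R)}_{Z}$ is a quasi-isomorphism of sheaves on $Y(\Cx)_{\an}$, and flabbiness of the completed de Rham sheaves then passes this to global sections. Without these two inputs your outline is a plan, not a proof.

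There is also a genuine error in your closing claim that ``naturality in $Z$ is automatic since every step is functorial.'' Your construction for singular $Z$ depends on a choice of embedding $Z \into \bA^N$, which cannot be made functorially in $Z$, and your smooth-case and singular-case constructions are different, so they do not assemble to a natural quasi-isomorphism. The paper addresses exactly this point: the construction is shown to work for pro-smooth affine $Y$ after passing to filtered colimits, and one takes the canonical choice $Y=\Spec \Symm_{\Cx}O(Z)$, which is functorial in $Z$.
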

\begin{proof}
We first need to find a cofibrant replacement for the presheaf $(\Q.Z)_{\dR}$.  Choose a closed immersion $Z \into Y$ into a smooth $\Cx$-scheme $Y$, and write $\hat{Y}_Z$ for the formal completion of $Y$ along $Z$ --- this is an object of $\ind(\Aff_{\Cx, \fin})$. Now consider the simplicial ind-scheme $\tilde{Y}_Z$ given by $(\tilde{Y}_Z)_n:= \widehat{Y^{n+1}}_Z$. For any $B \in \CAlg(\Cx)$, we thus have
\[
 (\tilde{Y}_Z)_n(B) \cong Y(B)^{n+1}\by_{Y(B^{\red})^{n+1}}Z(B^{\red}),
\]
so $\tilde{Y}_Z(B)$ is the \v Cech nerve of $\pi \co Y(B)\by_{Y(B^{\red})}Z(B^{\red}) \to Z(B^{\red}) $, and thus weakly equivalent to the image of $\pi$. Since $Y$ is smooth, $Y(B) \to Y(B^{\red})$  is surjective, so $\tilde{Y}_Z(B) \to Z(B^{\red})$ is a weak equivalence.

We therefore have a weak equivalence $N\Q.\tilde{Y}_Z \to (\Q.Z)_{\dR}$ in $\Ch_{\Q}(\Aff_{\Cx})$, with $ N\Q.\tilde{Y}_Z$ cofibrant, so
\[
 \oR\C^{\infty}(Z_{\dR},\R) \simeq N_c \C^{\infty}(\tilde{Y}_Z, \R),
\]
where $N_c$ denotes cosimplicial conormalisation. The construction of \cite{Gr} applied to the pro-ring $ \C^{\infty}(\tilde{Y}_Z)$ gives a quasi-isomorphism from $N_c \C^{\infty}(\tilde{Y}_Z, \R)$ to $\widehat{A^{\bt}(Y(\Cx),\R)}_{Z}$, the completion of the real  de Rham complex of $Y(\Cx)$ with respect to $\ker(\C^{\infty}(Y,\R) \to \C^{\infty}(Z,\R)$.  If $I$ is the kernel of $\C^{\infty}(\tilde{Y}_Z, \R) \to \C^{\infty}(\hat{Y}_Z, \R)$, then filtration by powers of $I$ maps quasi-isomorphically to the brutal truncation filtration on $\widehat{A^{\bt}(Y(\Cx),\R)}_{Z}$.

Writing $\widehat{\sA^{\bt}(Y(\Cx),\R)}_{Z} $ for the natural complex of sheaves on $Y(\Cx)_{\an}$ whose global sections are $\widehat{A^{\bt}(Y(\Cx),\R)}_{Z}$, the proof of \cite[Theorem IV.1.1]{hartshorneDRCoho} generalises from the holomorphic setting to the smooth setting to show that $\R_{Z(\Cx)} \to   \widehat{\sA^{\bt}(Y(\Cx),\R)}_{Z} $ is a sheaf quasi-isomorphism on $Y(\Cx)_{\an}$. Because the sheaves $\widehat{\sA^n(Y(\Cx),\R)}_{Z}$ are flabby, we thus have 
\[
  \oR\Gamma(Y(\Cx)_{\an}, \R_{Z}) \simeq \widehat{A^{\bt}(Y(\Cx),\R)}_{Z};
\]
since $\oR\Gamma(Z(\Cx)_{\an}, \R) \simeq \oR\Gamma(Y(\Cx)_{\an}, \R_{Z})$, this gives
\[
 \oR\Gamma(Z(\Cx)_{\an}, \R) \simeq \oR\C^{\infty}(Z_{\dR},\R).
\]

In order to make this quasi-isomorphism natural in $Z$, we would need to choose $Y$ functorially. However, passing to filtered colimits allows the construction to work for any pro-smooth affine scheme $Y$, and we can just take $Y=\Spec \Symm_{\Cx}O(Z)$. 
\end{proof}

\begin{corollary}\label{blanccor0}
For any spectrum-valued presheaf $F$ on $\Aff_{\Cx}$ which is locally of finite presentation, we have
\[
 \oR\C^{\infty}(F_{\dR},\R) \simeq \HHom_{\R} ((|F|_{\bS})_{\R}, \R),
\]
 for Blanc's functor $|-|_{\bS}$ from \cite[Definition 3.13]{blancToplKTh}.
\end{corollary}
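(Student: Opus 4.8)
The plan is to regard each side as a functor of the spectrum-valued presheaf $F$, observe that both send homotopy colimits to homotopy limits, and then compare them on a generating family of representables, where Proposition \ref{dRprop} supplies the calculation. On the left, $F \mapsto F_{\dR}$ is precomposition with the reduction functor on the indexing category and so preserves homotopy colimits; the passage $F \mapsto F_{\Q}$ to rational chains preserves homotopy colimits, $\oL\Sm^*$ is a left adjoint by Lemma \ref{Smlemma} and hence preserves homotopy colimits, and the dualisation $(-)^{\vee}$ converts colimits into limits; thus $F \mapsto \oR\C^{\infty}(F_{\dR},\R)$ sends homotopy colimits to homotopy limits. On the right, Blanc's realization $|-|_{\bS}$ preserves homotopy colimits since it is a left Kan extension, the chain functor $(-)_{\R}$ is left adjoint to $H$ and preserves homotopy colimits, and $\HHom_{\R}(-,\R)$ sends colimits to limits, so $F \mapsto \HHom_{\R}((|F|_{\bS})_{\R},\R)$ likewise sends homotopy colimits to homotopy limits.

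Next I would use Lemma \ref{procatlemma} to reduce to the small category $\Aff_{\Cx,\fin}$ and write an arbitrary $F$ locally of finite presentation as a homotopy colimit of shifts of representable suspension spectra $\Sigma^{\infty}_+ h_Z$ with $Z \in \Aff_{\Cx,\fin}$. Since both functors are compatible with shifts and take these homotopy colimits to homotopy limits, it suffices to produce a natural equivalence on the generators. For $F = \Sigma^{\infty}_+ h_Z$ one computes $(\Sigma^{\infty}_+ h_Z)_{\dR} = \Sigma^{\infty}_+(Z_{\dR})$, so the left-hand side is $\oR\C^{\infty}(Z_{\dR},\R)$, which Proposition \ref{dRprop} identifies with $\oR\Gamma(Z(\Cx)_{\an},\R)$. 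On the right, the defining property of Blanc's functor gives $|\Sigma^{\infty}_+ h_Z|_{\bS} \simeq \Sigma^{\infty}_+ Z(\Cx)_{\an}$, whence $(|\Sigma^{\infty}_+ h_Z|_{\bS})_{\R} \simeq C_*(Z(\Cx)_{\an};\R)$ by the definition of the chain functor $(-)_{\R}$, and its $\R$-linear dual is again $\oR\Gamma(Z(\Cx)_{\an},\R)$. The two generators therefore agree.

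The main obstacle is naturality: to assemble the termwise equivalences into an equivalence of the two homotopy limits I need the comparison on representables to be natural in $Z$. On the Blanc side this is built into the construction, since the realization of the Yoneda embedding is the analytification $Z \mapsto Z(\Cx)_{\an}$. On the smooth-functions side the quasi-isomorphism of Proposition \ref{dRprop} is a priori defined only after an auxiliary embedding $Z \into Y$ into a smooth scheme; here I would invoke the functorial choice $Y = \Spec \Symm_{\Cx}O(Z)$ indicated at the end of that proof to make $\oR\C^{\infty}(Z_{\dR},\R) \simeq \oR\Gamma(Z(\Cx)_{\an},\R)$ natural in $Z$. Granting this naturality, both functors are identified with the homotopy-limit extension of the single functor $Z \mapsto \oR\Gamma(Z(\Cx)_{\an},\R)$ on representables, and the corollary follows.
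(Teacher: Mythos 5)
Your proposal is correct and is essentially the paper's own argument: the paper packages the reduction-to-representables step as a homotopy end formula over $\Aff_{\Cx,\fin}$ rather than as a cell decomposition of $F$ into representable suspension spectra, but the substance---reduction to finite-type affines via local finite presentation, termwise comparison using Proposition \ref{dRprop}, and naturality of that comparison secured by the functorial choice $Y=\Spec \Symm_{\Cx}O(Z)$---is identical. The two formulations (hocolim-to-holim on generators versus dual Kan extension computed as a homotopy end) are interchangeable, so there is no gap.
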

\begin{proof}
The functor $|-|_{\bS}$ is defined as the derived enriched left Kan extension of the singular space functor $\ssp \co \Aff_{\Cx} \to s\Set$ along the Yoneda embedding of $\Aff_{\Cx}$ in spectral presheaves. Since $F$ is locally of finite presentation, we can calculate $|F|_{\bS}$  as the enriched left Kan extension along the Yoneda embedding of $\Aff_{\Cx, \fin}$, either by the reasoning of Lemma \ref{indlemma} or because the comma category $ (\Aff_{\Cx}\da F)$ is equivalent to $\pro(\Aff_{\Cx ,\fin}\da F)$, so has $(\Aff_{\Cx ,\fin}\da F)$ as a final subcategory.
 
Now, observe that $F \mapsto F_{\dR}$ is the enriched derived left Kan extension of $Z \mapsto Z_{\dR}$ along the Yoneda embedding, so Proposition \ref{dRprop} combines with the definition of $ \oR\C^{\infty}$ to show that
\begin{align*}
 \oR\C^{\infty}(F_{\dR},\R) &\simeq \int^h_{Z \in \Aff_{\Cx, \fin}} \HHom_{\R}(F(Z)_{\R}, \oR\C^{\infty}(Z_{\dR},\R))\\
&\simeq \int^h_{Z \in \Aff_{\Cx, \fin}} \HHom_{\R}(F(Z)_{\R}\ten_{\R} N\CC_{\bt}(\ssp(Z),\R), \R)\\
&\simeq \HHom_{\Q}((|F|_{\bS})_{\R},\R),
\end{align*}
where $\int^h$ denotes homotopy end in the category of real chain complexes.
 \end{proof}

\begin{corollary}\label{blanccor}
For any complex dg category $\cA$, there is a canonical zigzag
\[
  \oR\C^{\infty}(\uline{K}(\cA)_{\dR},\R)\simeq \HHom_{\R}(K^{\Blanc,st}(\cA)_{\R},\R)
\]
of quasi-isomorphisms, where $K^{\Blanc,st}(\cA)$ is Blanc's semi-topological $K$-theory. 
\end{corollary}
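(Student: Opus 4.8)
The plan is to obtain this as an immediate specialisation of Corollary \ref{blanccor0}. The presheaf $\uline{K}(\cA)$ is spectrum-valued, and as noted after its definition the associated complex $\uline{K}(\cA)_{\Q}$ is locally of finite presentation, since non-connective $K$-theory commutes with the filtered colimits of rings that govern reduction to finite type. Because the de Rham presheaf $\uline{K}(\cA)_{\dR}$ evaluates $\uline{K}(\cA)$ only on reduced rings $B^{\red}$, it is the de Rham presheaf of a locally finitely presented spectral presheaf on $\Aff_{\Cx}$, so Corollary \ref{blanccor0} applies directly and furnishes the quasi-isomorphism
\[
 \oR\C^{\infty}(\uline{K}(\cA)_{\dR},\R) \simeq \HHom_{\R}((|\uline{K}(\cA)|_{\bS})_{\R},\R),
\]
together with the explicit zigzag built in its proof from Proposition \ref{dRprop} and the homotopy end computation.

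The only remaining step is to identify $|\uline{K}(\cA)|_{\bS}$ with Blanc's semi-topological $K$-theory. This is essentially a matter of unwinding definitions: in \cite{blancToplKTh} the semi-topological $K$-theory is defined as the topological realization $|-|_{\bS}$ of the presheaf $B \mapsto K(\cA\ten_{\Cx}B)$, so by construction $K^{\Blanc,st}(\cA) = |\uline{K}(\cA)|_{\bS}$. Substituting this equality into the displayed equivalence yields the claimed zigzag.

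The one point needing care, and the place where I expect any friction, is reconciling the diagram over which Blanc forms his realization with the one used in Corollary \ref{blanccor0}: Blanc realizes over finite-type affines $\Aff_{\Cx,\fin}$, whereas $\uline{K}(\cA)$ lives a priori on all of $\Aff_{\Cx}$. This discrepancy is resolved exactly by the local finite presentation of $\uline{K}(\cA)$, which --- as already exploited at the start of the proof of Corollary \ref{blanccor0} --- makes $(\Aff_{\Cx,\fin}\da \uline{K}(\cA))$ a final subcategory of $(\Aff_{\Cx}\da \uline{K}(\cA))$ and hence makes the two realizations agree. Beyond this bookkeeping there is no genuine obstacle, as all the analytic input has been absorbed into Proposition \ref{dRprop} and Corollary \ref{blanccor0}.
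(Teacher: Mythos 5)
Your proof is correct and takes essentially the same route as the paper, which likewise deduces the result by applying Corollary \ref{blanccor0} to the locally finitely presented presheaf $\uline{K}(\cA)$ and then identifying $|\uline{K}(\cA)|_{\bS}=K^{\Blanc,st}(\cA)$ directly from Blanc's definition of semi-topological $K$-theory. The finite-type versus all-affines reconciliation you flag is real but is already absorbed into the proof of Corollary \ref{blanccor0}, so, as you suspect, it creates no additional work here.
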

\begin{proof}
 The $K$-theory presheaf $\uline{K}(\cA)$ is locally of finite presentation, and by \cite[Definition 4.1]{blancToplKTh}, $|\uline{K}(\cA)|_{\bS}=K^{\Blanc,st}(\cA)$.
\end{proof}

\begin{remark}
 Beware that Blanc's semi-topological $K$-theory is not the same as that of Friedlander and Walker. However, it shares the property \cite[Definition 4.13 and Proposition 4.32]{blancToplKTh} that  inverting the Bott element $\beta$ gives a weak equivalence between $K^{\Blanc,st}(X)[\beta^{-1}]$ and the topological $K$-theory of $X(\Cx)_{\an}$ for any separated $\Cx$-scheme of finite type.
\end{remark}

\begin{remark}\label{KHrmk}
For any affine $\Cx$-scheme $Z$, a model for the associated simplicial set $\ssp(Z)$ is given by $n \mapsto \C^{\infty}(\Delta_{\R}^n, Z(\Cx))= Z(\C^{\infty}(\Delta_{\R}^n, \Cx))$, where 
we write $\Delta_{\R}^n$ for the hyperplane $\{\sum x_i=1\}$ in $\R^{n+1}$. In other words, $\ssp(Z)\simeq \ho\LLim_{n \in \Delta^{\op}} Z(\C^{\infty}(\Delta_{\R}^n, \Cx))$. For any spectral presheaf $F$ on $\Aff_{\Cx}$, we thus have
\[
|F|_{\bS} \simeq   \ho\LLim_{n \in \Delta^{\op}} F(\C^{\infty}(\Delta_{\R}^n, \Cx))
\]
by left Kan extension, and in particular
\[
 K^{\Blanc,st}(\cA)\simeq \ho\LLim_{n \in \Delta^{\op}} K(\cA\ten_{\R}\C^{\infty}(\Delta_{\R}^n, \R)).
\]

There is thus a close analogy between  Blanc's semi-topological $K$-theory and Weibel's homotopy $K$-theory \cite{weibelKH}, the latter being
\[
KH(\cA)\simeq \ho\LLim_{n \in \Delta^{\op}} K(\cA\ten_{\Z}\Z[t_0, \ldots, t_n]/(1-\sum t_i)). 
\]
Thus $K^{\Blanc,st}$ should perhaps be thought of as the stabilisation of $K$ with respect to homotopies for the manifold $\R$, in the same way that $KH$ is the stabilisation of $K$ with respect to $\bA^1$-homotopies.
\end{remark}

\section{Cyclic homology and Goodwillie's comparison}

\subsection{Goodwillie's comparison}

\begin{definition}
Given a commutative ring $k$, write  $d\Alg_k$ for the category of associative, not necessarily commutative dg algebras concentrated in non-negative chain degrees. Given $A \in d\Alg_k$, write $d\Alg_A$ for the comma category $(A \da d\Alg_k)$.
\end{definition}

\begin{definition}
 Given a commutative ring $k$ and a dg category $\cA$ over $k$, write $\HC^k(\cA)$ for the chain complex associated to  cyclic homology  of $\cA$ over $k$. 
\end{definition}

\begin{definition}
 Given a functor $F$ from dg categories to chain complexes, and a dg functor $f \co \cA \to \cB$, write $F(f)$ for the homotopy fibre of $F(\cA) \to F(\cB)$.
\end{definition}

\begin{theorem}\label{goodwillie}
If $f \co A \to B$ is a morphism in $ d\Alg_{\Z}$ for which $\H_0A \to \H_0B$ is a surjection with nilpotent kernel, then there is a canonical zigzag $K(f)_{\Q} \simeq \HC^{\Q}(f\ten_{\Z}\Q)_{[-1]}$ of weak equivalences between  chain complexes given by the Chern character. 
\end{theorem}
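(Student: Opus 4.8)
The statement is Goodwillie's theorem, so the real content is his; what follows is the shape I would give the argument. The plan is to compute the relative rational $K$-theory through the homology of $\GL$, identify that with Lie algebra homology of $\gl$ because the kernel is nilpotent, and then match the result with cyclic homology by the Loday--Quillen--Tsygan theorem, which produces exactly the shift $_{[-1]}$. Compatibility with the Chern character is then a matter of tracking the Dennis trace through this chain of identifications.

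First I would reduce to a clean situation. Since objects of $d\Alg_{\Z}$ are linked to simplicial rings by the (lax monoidal) Dold--Kan correspondence, and both $K$ and $\HC^{\Q}$ are invariant under the resulting weak equivalences, it suffices to treat a map of simplicial rings with $\pi_0$ surjective and nilpotent kernel. As $K(f)$ and $\HC^{\Q}(f\ten_{\Z}\Q)$ are homotopy fibres, I would then factor $f$ to reduce to a surjection $A \onto B$ whose kernel $I$ is a (pro-)nilpotent ideal, controlled by the filtration by its powers; it is precisely this nilpotence that makes the rational comparison below converge.

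Next comes the heart of the matter. Rationally, relative $K$-theory is the module of primitives in the relative homology of $\GL$ (via the plus construction and the Milnor--Moore theorem). For a nilpotent ideal $I$, the rational homotopy theory of the nilpotent groups $\GL(A,I)$ lets one replace this relative group homology by the relative Lie algebra homology of $\gl$; this Malcev-type step is where nilpotence is essential. The additive Loday--Quillen--Tsygan theorem then identifies the relevant primitives of relative $\gl$-homology with relative cyclic homology in degree one lower, giving
\[
 K(f)_{\Q} \simeq \HC^{\Q}(f\ten_{\Z}\Q)_{[-1]}.
\]
To see that this equivalence is the one induced by the Chern character, I would factor $\ch$ through negative cyclic homology and use nilpotent invariance of periodic cyclic homology, $\HP^{\Q}(f\ten_{\Z}\Q)\simeq 0$: in the fundamental exact sequence
\[
 \cdots \to \HN_n \to \HP_n \to \HC_{n-2} \to \HN_{n-1} \to \cdots
\]
the connecting map is then an isomorphism, so $\HN^{\Q}(f\ten_{\Z}\Q) \simeq \HC^{\Q}(f\ten_{\Z}\Q)_{[-1]}$, and the relative Chern character $K(f)_{\Q}\to \HN^{\Q}(f\ten_{\Z}\Q)$ supplies the required canonical zigzag.

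The main obstacle is the group-homology-to-Lie-algebra-homology comparison in the previous paragraph: this is genuinely deep and is exactly Goodwillie's theorem, so in practice I would cite it as a black box rather than reprove it. The original work would then lie in the reductions of the second paragraph and, more delicately, in checking that nilpotent invariance of $\HP$ and the fundamental sequence are natural enough to yield a canonical zigzag of quasi-isomorphisms rather than a bare degreewise isomorphism, as the statement demands.
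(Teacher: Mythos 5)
Your reduction to simplicial rings via Dold--Kan and your decision to quote Goodwillie's theorem as a black box match the paper's opening step, and your packaging of the Chern character through $\HN$ together with relative nilinvariance of $\HP$ is a legitimate way to express the connective comparison. But there is a genuine gap: everything in your argument --- plus construction, Milnor--Moore, the passage from $\GL$ to $\gl$, Loday--Quillen--Tsygan --- lives in the world of \emph{connective} $K$-theory, whereas in this paper $K$ denotes non-connective $K$-theory (it is the $K$ appearing in $\uline{K}(\cA)(B):=K(\cA\ten_{\Cx}B)$, and the theorem is invoked in that form in Corollary \ref{goodwilliecor0}). What you prove is $\tilde{K}(f)_{\Q}\simeq \HC^{\Q}(f\ten_{\Z}\Q)_{[-1]}$ for $\tilde{K}=\tau_{\ge 0}K$, which is exactly where Goodwillie stops; you never rule out extra rational homotopy of the fibre of the non-connective theories in negative degrees, so as written your proof establishes a strictly weaker statement than the one asserted.

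The paper's proof devotes its actual content to closing precisely this gap, by iterated Bass delooping (citing Schlichting). The two observations that make this work are: first, $A[t]\to B[t]$ and $A[t,t^{-1}]\to B[t,t^{-1}]$ are again surjections on $\H_0$ with nilpotent kernel, so the connective comparison applies to them as well; second, cyclic homology admits a compatible delooping, since the projective line formula $\HC^{\Q}(f\ten_{\Z}\bP^1_{\Q})\simeq \HC^{\Q}(f\ten_{\Z}\Q)\oplus\HC^{\Q}(f\ten_{\Z}\Q)$ shows that the total complex of
\[
 \HC^{\Q}(f\ten_{\Z}\Q) \to \HC^{\Q}(f\ten_{\Z}\Q[t]) \oplus \HC^{\Q}(f\ten_{\Z}\Q[t^{-1}])\to \HC^{\Q}(f[t,t^{-1}]\ten_{\Z}\Q)
\]
is quasi-isomorphic to $\HC^{\Q}(f\ten_{\Z}\Q)_{[-1]}$, mirroring the Bass fundamental sequence that builds $(\tau_{\ge n-1}K(f)_{\Q})_{[-1]}$ out of the degree-$n$ truncations of $K(f)_{\Q}$, $K(f[t])_{\Q}$, $K(f[t^{-1}])_{\Q}$ and $K(f[t,t^{-1}])_{\Q}$. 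Inductively comparing these total complexes extends the equivalence to $\tau_{\ge n}K(f)_{\Q}$ for all $n\le 0$, hence to non-connective $K$-theory. Without this step, or some substitute for it (for instance a direct proof that the relative negative $K$-groups of a nilpotent $\H_0$-surjection vanish rationally, compatibly with the Chern character), your argument does not reach the statement as it is used in the rest of the paper.
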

\begin{proof}
 Using the Quillen equivalence between simplicial algebras and dg algebras, the main theorem of \cite{goodwillieChern} gives $ \tilde{K}(f)_{\Q} \simeq \HC^{\Q}(f\ten_{\Z}\Q)_{[-1]}$ for connective $K$-theory $\tilde{K}$.

Since the morphisms 
$A[t] \to B[t]$ and $A[t,t^{-1}] \to B[t,t^{-1}]$
are also nilpotent surjections on $\H_0$, iterated Bass delooping then extends the comparison to non-connective $K$-theory via 
 \cite[Theorem 7.1]{schlichtingNegative}.

More explicitly, a resolution of the good truncation $(\tau_{\ge n-1}K(f)_{\Q})_{[-1]}$ is given by the total complex of
\[
 \tau_{\ge n}K(f)_{\Q} \to  \tau_{\ge n}K(f[t])_{\Q}\oplus \tau_{\ge n}K(f[t^{-1})_{\Q}\to \tau_{\ge n}K(f[t,t^{-1}])_{\Q},
\]
 and $\tilde{K}= \tau_{\ge 0}K$. Meanwhile, the total complex of 
\[
 \HC^{\Q}(f\ten_{\Z}\Q) \to \HC^{\Q}(f\ten_{\Z}\Q[t]) \oplus \HC^{\Q}(f\ten_{\Z}\Q[t^{-1}])\to \HC^{\Q}(f[t,t^{-1}]\ten_{\Z}\Q)
\]
is quasi-isomorphic to $\HC^{\Q}(f\ten_{\Z}\Q)_{[-1]}$, because cohomology of projective space gives $\HC^{\Q}(f\ten_{\Z}\bP^1_{\Q})\simeq \HC^{\Q}(f\ten_{\Z}\Q) \oplus \HC^{\Q}(f\ten_{\Z}\Q)$. Inductively comparing these total complexes then gives
\[
 \tau_{\ge n}K(f)_{\Q} \simeq \HC^{\Q}(f\ten_{\Z}\Q)_{[-1]}
\]
 for all $n \le 0$. 
\end{proof}

\begin{corollary}\label{goodwilliecor0}
 For any $A \in d\Alg_{\Cx}$, there is a canonical zigzag of weak equivalences 
\[
 \cone(\uline{K}(A)_{\Q} \to \uline{K}(A)_{\dR,\Q}) \simeq \cone(\uline{\HC}^{\Q}(A) \to \uline{\HC}^{\Q}(A)_{\dR})_{[-1]}
\]
of presheaves in $\Ch_{\Q}(\Aff_{\Cx})$ given by the Chern character.
\end{corollary}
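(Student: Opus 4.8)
The plan is to evaluate both cones at an algebra $B$, to recognise the two maps of presheaves as the $K$-theory and cyclic homology of the reduction morphism $A\ten_{\Cx}B \to A\ten_{\Cx}B^{\red}$, and then to apply Theorem \ref{goodwillie}. Unwinding the definitions, at an object $B\in\CAlg_{\Cx}$ the map $\uline{K}(A)_{\Q}\to\uline{K}(A)_{\dR,\Q}$ is the map $K(A\ten_{\Cx}B)_{\Q}\to K(A\ten_{\Cx}B^{\red})_{\Q}$ induced by the surjection $B\onto B^{\red}$; write $f_B\co A\ten_{\Cx}B\to A\ten_{\Cx}B^{\red}$ for the corresponding morphism in $d\Alg_{\Cx}\subset d\Alg_{\Z}$. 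Likewise, at $B$ the map $\uline{\HC}^{\Q}(A)\to\uline{\HC}^{\Q}(A)_{\dR}$ is $\HC^{\Q}(A\ten_{\Cx}B)\to\HC^{\Q}(A\ten_{\Cx}B^{\red})$, so the homotopy fibres of the two maps at $B$ are exactly $K(f_B)_{\Q}$ and $\HC^{\Q}(f_B)$ in the notation introduced before Theorem \ref{goodwillie}.

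Since $\uline{K}(A)_{\Q}$, $\uline{K}(A)_{\dR,\Q}$, $\uline{\HC}^{\Q}(A)$ and $\uline{\HC}^{\Q}(A)_{\dR}$ are all locally of finite presentation (the reduction functor $B\mapsto B^{\red}$ commuting with filtered colimits), Lemma \ref{procatlemma} reduces the construction of a natural equivalence of cones to the full subcategory $\Aff_{\Cx,\fin}$, so I may assume $B$ finitely generated. For such $B$ I would verify the hypotheses of Theorem \ref{goodwillie}: because $\Cx$ is a field, $A\ten_{\Cx}(-)$ is exact, so $\H_0(A\ten_{\Cx}B)=\H_0(A)\ten_{\Cx}B$ and the map on $\H_0$ is $\H_0(A)\ten_{\Cx}B\to\H_0(A)\ten_{\Cx}B^{\red}$, a surjection with kernel $\H_0(A)\ten_{\Cx}N$ for $N$ the nilradical of $B$. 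As $B$ is Noetherian, $N$ is nilpotent, hence so is this kernel, and $f_B$ satisfies the hypotheses of Theorem \ref{goodwillie}.

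That theorem then furnishes, via the Chern character, a canonical zigzag $K(f_B)_{\Q}\simeq\HC^{\Q}(f_B)_{[-1]}$. It remains to pass from homotopy fibres to cones: in the stable setting $\cone(g)\simeq\fib(g)[1]$, so using the identifications above and the fact that the shifts $[1]$ and $(-)_{[-1]}$ commute, I obtain
\begin{align*}
\cone(\uline{K}(A)_{\Q}\to\uline{K}(A)_{\dR,\Q})(B) &\simeq K(f_B)_{\Q}[1]\simeq\HC^{\Q}(f_B)_{[-1]}[1]\\
&\simeq\cone(\uline{\HC}^{\Q}(A)\to\uline{\HC}^{\Q}(A)_{\dR})(B)_{[-1]},
\end{align*}
which is the asserted equivalence at the level of objects.

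The main obstacle will be upgrading this objectwise statement to an equivalence of presheaves, i.e.\ checking that the entire zigzag is natural in $B$. The assignment $B\mapsto f_B$ is a functor from $\Aff_{\Cx,\fin}$ to the arrow category of $d\Alg_{\Cx}$, so what is required is that Goodwillie's comparison --- including the Bass-delooping step used in Theorem \ref{goodwillie} to reach non-connective $K$-theory --- be natural with respect to morphisms of such $f_B$. Granting this naturality, the resulting equivalence of presheaves on $\Aff_{\Cx,\fin}$ extends along the fully faithful functor $\iota_!$ of Lemma \ref{procatlemma} to the desired equivalence in $\Ch_{\Q}(\Aff_{\Cx})$.
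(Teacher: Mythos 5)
Your proposal is correct and follows essentially the same route as the paper: evaluate at finitely generated $B$, note that the nilradical of $B$ is nilpotent so that $A\ten_{\Cx}B \to A\ten_{\Cx}B^{\red}$ satisfies the hypotheses of Theorem \ref{goodwillie}, apply that theorem pointwise, and extend to all of $\CAlg(\Cx)$ by filtered colimits (equivalently, via $\iota_!$ and local finite presentation). Your extra care over the fibre-versus-cone translation and the naturality of Goodwillie's comparison in $B$ is reasonable but matches what the paper's "canonical zigzag" is already taken to provide.
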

\begin{proof}
 For any $B \in \CAlg(\Cx)_{\fin}$, the morphism $B \to B^{\red}$ has nilpotent kernel. Thus $A\ten_{\Cx}B \to A\ten_{\Cx}B^{\red}$ satisfies the conditions of  Theorem \ref{goodwillie}, giving
\[
 \cone(\uline{K}(A)(B)_{\Q} \to \uline{K}(A)_{\dR}(B)_{\Q}) \simeq \cone(\uline{\HC}^{\Q}(A)(B) \to \uline{\HC}^{\Q}(A)_{\dR}(B))_{[-1]}.
\]
The result then extends to the whole of $ \CAlg(\Cx)$ by passing to filtered colimits.
\end{proof}

Zariski descent allows us to extend this far more generally: 
\begin{corollary}\label{goodwilliecor}
 If $X$ is a quasi-compact semi-separated derived scheme  over $\Cx$, with 
$\sE \in d\Alg_{\sO_X}$ perfect as an $\sO_X$-module,  and $\cA$ is a semi-orthogonal summand of $\per_{dg}(\sE)$, the Chern character gives
 \[
 \cone(\uline{K}(\cA)_{\Q} \to \uline{K}(\cA)_{\dR,\Q}) \simeq \cone(\uline{\HC}^{\Q}(\cA) \to \uline{\HC}^{\Q}(\cA)_{\dR})_{[-1]}.
\]
\end{corollary}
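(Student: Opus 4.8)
The plan is to reduce to the affine case already settled in Corollary \ref{goodwilliecor0}, by combining additivity of the invariants in the variable $\cA$ with Zariski descent in the variable $X$.

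First I would dispose of the semi-orthogonal summand. Both comparison cones, viewed as functors of the input dg category, are additive (localizing) invariants: $\uline{K}$ and $\uline{\HC}^{\Q}$ send semi-orthogonal decompositions to direct sums; the de Rham twist $B\mapsto(-)(B^{\red})$ preserves this, being computed pointwise after the base change $\ten_{\Cx}B^{\red}$; and the cone of two additive functors is again additive. Since the Chern character is a natural transformation of additive functors, a decomposition $\per_{dg}(\sE)=\langle\ldots,\cA,\ldots\rangle$ splits both sides of the asserted equivalence compatibly as direct sums indexed by the factors. A direct sum of maps is a weak equivalence exactly when each summand map is, so it suffices to treat $\cA=\per_{dg}(\sE)$.

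Next I would set up the descent. An equivalence of presheaves in $\Ch_{\Q}(\Aff_{\Cx})$ may be tested at each $B\in\CAlg(\Cx)$, so I fix $B$; then $\uline{K}(\per_{dg}(\sE))(B)=K(\per_{dg}(\sE_B))$ is the $K$-theory of perfect modules over the sheaf of dg algebras $\sE_B$ on $X_B:=X\times_{\Cx}\Spec B$, and similarly for $\uline{\HC}^{\Q}$ and for the reduced base changes entering the de Rham twists. By Thomason--Trobaugh for non-connective $K$-theory and by Keller's localization theorem for cyclic homology, all four functors satisfy Zariski (Mayer--Vietoris) descent on the quasi-compact semi-separated scheme $X_B$; hence so do the two comparison cones, since in the stable category $\Ch_{\Q}$ the cone commutes with the finite homotopy limits of Mayer--Vietoris. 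I would then induct on the size of a finite affine cover $X=U_1\cup\cdots\cup U_n$: setting $V=U_1\cup\cdots\cup U_{n-1}$ and $W=U_n$, semi-separatedness ensures $V\cap W$ is covered by $n-1$ affines, so the inductive hypothesis applies to $V$, $W$ and $V\cap W$, and naturality of the Chern character propagates the equivalence through the Mayer--Vietoris fibre sequence to $X=V\cup W$. The base case is a single affine $U=\Spec R$, where $\sE|_U$ is a connective dg $R$-algebra, perfect as an $R$-module; global sections identify $\per_{dg}(\sE|_U)$ with $\per_{dg}(A)$ for $A=\Gamma(U,\sE)\in d\Alg_{R}\subset d\Alg_{\Cx}$, Morita invariance of $K$-theory and cyclic homology identifies the four presheaves with those of $A$ compatibly with the de Rham twists, and Corollary \ref{goodwilliecor0} applied to $A$ supplies the required equivalence naturally in $A$.

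The main obstacle I anticipate is bookkeeping the descent for the two de Rham--twisted presheaves simultaneously and uniformly in $B$: one must verify that $\uline{K}(\cA)_{\dR}$ and $\uline{\HC}^{\Q}(\cA)_{\dR}$ descend, which I would reduce to the untwisted statements by noting that the twist only substitutes $B^{\red}$ for $B$, together with the affine identification $\per_{dg}(\sE|_U)\simeq\per_{dg}(\Gamma(U,\sE))$, for which the hypothesis that $\sE$ be perfect over $\sO_X$ is precisely what is required. Once these points are secured, the gluing across the cover is formal from naturality of the Chern character.
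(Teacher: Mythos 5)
Your proposal is correct and follows essentially the same route as the paper: both arguments reduce to the affine case of Corollary \ref{goodwilliecor0} via Zariski descent for $K$ and $\HC$ (the paper totalises the finite \v{C}ech resolution that semi-separatedness provides, where you run a Mayer--Vietoris induction on the size of the affine cover), and both dispose of the semi-orthogonal summand by additivity together with naturality of the Chern character. The differences are packaging, not substance.
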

\begin{proof}
A  derived scheme $X$ is a derived $n$-geometric stack 
whose underived truncation $\pi^0X$ is a scheme. The quasi-compactness  hypothesis for $\pi^0X$  means that there is a Zariski cover $U_1, \ldots, U_n$ for $X$ by derived affines, so each $O(U_i)$ is a commutative object of $d\Alg_{\Cx}$.   Now take the reduced \v Cech resolution $\check{X}$ for $X$, given by
\[
 \ldots \coprod_{i<j<k} U_i\by^h_XU_j\by   \Rrightarrow  \coprod_{i<j} U_i\by^h_XU_j  \implies \coprod_i U_i;
\]
this diagram terminates at $U_1\by^h_XU_2\by^h_X \ldots \by^h_X U_n $ and  since $X$ is semi-separated, 
it is a diagram of derived affines.

Zariski descent for $K$ then gives a resolution of  $\uline{K}(\per_{dg}(\sE) )$ by 
\[
 \uline{K}(\sE(\check{X}_0 )) \to  \uline{K}(\sE(\check{X}_1 )) \to \ldots \to \uline{K}(\sE(\check{X}_n )),
\]
so Corollary \ref{goodwilliecor0} implies the required result when $\cA=  \per_{dg}(\sE)$.

Finally, for a  $\cA$  a semi-orthogonal summand of $\per_{dg}(\sE)$ as in \cite[\S 1.2]{orlovSmoothProperNC}, additivity of $K$  gives $\uline{K}(\cA)_{\Q}$ as a summand of $\uline{K}(\per_{dg}(\sE))$; likewise, additivity of $\HC$ gives $\uline{\HC}^{\Q}(\cA)$ as a summand of $\uline{\HC}^{\Q}(\per_{dg}(\sE)) $, so the  equivalence extends to $\cA$. 
\end{proof}

\begin{remarks}\label{orlovrmk}
 Beware that the hypothesis $\sE \in d\Alg_{\sO_X}$ implies that $\sH_i(\sE)=0$ for $i<0$, so is stronger than the customary cohomological boundedness condition for derived non-commutative schemes  in \cite[Definition 3.3]{orlovSmoothProperNC}. However, Corollary \ref{goodwilliecor} applies to all quasi-compact semi-separated derived schemes $X$ (taking $\sE=\sO_X$) and to all of Orlov's geometric noncommutative schemes of \cite[Definition 4.3]{orlovSmoothProperNC}.

The conclusion of Corollary \ref{goodwilliecor} will also hold for any complex dg category $\cA$ Morita equivalent to one satisfying the conditions, and to any filtered colimit of such. This applies for instance if  $\cA$ is a dg category with $\H_i\cA(x,y)=0$ for all $i<0$ and all objects $x,y$;  it is a filtered colimit of its dg subcategories $\cA|_S$ on finite subsets $S$ of  objects, and  each $\cA|_S$ is Morita equivalent to $\tau_{\ge 0}(\bigoplus_{x,y \in S} \cA(x,y)) \in d\Alg_{\Cx}$. By Morita invariance, the conclusions thus hold for any complex dg category $\cA$ for which $\per_{dg}(\cA)$ has a set of  generators with no positive $\Ext$-groups between them; this amounts to having what Bondarko calls a weight structure (whereas a $t$-structure means having  no negative  $\Ext$-groups between generators). 
\end{remarks}

\subsection{Smooth functions on smash products of Eilenberg--MacLane spectra}\label{eilmacsn}

\begin{proposition}\label{keyprop}
Given  a cosimplicial real vector space $V$ with $\H^0V=0$, finite-dimensional in each level, the morphism 
\[
 \Symm_{\R}V \to \C^{\infty}(V^{\vee}, \R)
\]
of cosimplicial vector spaces is a quasi-isomorphism.
\end{proposition}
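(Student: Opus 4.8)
The plan is to compare the two cosimplicial vector spaces through the filtration by order of vanishing at the origin, reducing to a connectivity estimate powered by $\H^0V=0$. Write $C$ for the target cosimplicial vector space $n\mapsto \C^\infty((V^n)^\vee,\R)$, and let $\m\subset C$ be the augmentation ideal of functions vanishing at $0$; its powers $\m^p$ are natural in the finite-dimensional vector space $V^n$, hence define cosimplicial subobjects $\m^p\subset C$. The comparison map $\Symm_\R V\to C$ is a map of cosimplicial commutative algebras carrying $\Symm^{\ge p}V$ into $\m^p$, and by Hadamard's lemma the induced maps on associated graded pieces are the Taylor-coefficient isomorphisms $\Symm^pV\xrightarrow{\ \sim\ }\m^p/\m^{p+1}$. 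Were the filtration on $C$ complete this would finish the argument at once; the difficulty is that $\bigcap_p\m^p$, the ideal of functions flat at $0$, is nonzero, so I must control convergence by hand.

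The key computation is the connectivity estimate
\[
\H^m(\Symm^p V)=0\qquad\text{for }m<p .
\]
First I would establish this. Since we work over $\R$, the functor $\Symm^p$ is a natural direct summand of $V\mapsto V^{\ten p}$, so $\Symm^p V$ is a summand of $V^{\ten p}$; the cosimplicial Eilenberg--Zilber/Künneth theorem gives $\H^*(V^{\ten p})\cong(\H^*V)^{\ten p}$, and the summand cut out by $\Symm^p$ is the graded-symmetric power of $\H^*V$. As $\H^0V=0$, the graded vector space $\H^*V$ is concentrated in degrees $\ge1$, so its degree-$p$ graded-symmetric power lives in degrees $\ge p$, giving the claim. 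The same argument applies to any Schur functor: $\mathbb{S}_\lambda V$ is a summand of $V^{\ten|\lambda|}$, so $\H^m(\mathbb{S}_\lambda V)=0$ for $m<|\lambda|$.

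The heart of the proof, and the step I expect to be the main obstacle, is to promote this to a connectivity statement for the ideal powers:
\[
\H^m(\m^p)=0\qquad\text{for }m<p .
\]
A single short exact sequence $0\to\m^{p+1}\to\m^p\to\Symm^pV\to0$ does not suffice: the naive induction fails on the diagonal $m=p$, which is exactly where the flat functions hide. Instead I would use the full minimal resolution of $\m^p$: because the coordinate functions $x_1,\dots,x_{k}$ form a regular sequence on $\C^\infty(\R^{k})$ --- a consequence of Hadamard's lemma --- the ideal $\m^p$ admits the natural, $\GL$-equivariant hook resolution with $i$-th term $\mathbb{S}_{(p,1^i)}(V)\ten_\R C$, exactly as over the polynomial ring. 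By the Schur-functor estimate above, this term has $\H^m=0$ for $m<p+i$; totalising the resolution (whose filtration by homological degree is exhaustive and bounded below) then forces $\H^m(\m^p)=0$ for $m<p$. Verifying that this resolution remains exact over $\C^\infty(\R^{k})$, rather than only over polynomials, is the delicate point, and is where the regularity of the coordinate sequence must be used carefully to rule out contributions of the flat functions.

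Finally I would assemble the pieces. Taylor coefficients identify the quotient $C/\m^p$ naturally with $\bigoplus_{d<p}\Symm^dV$, compatibly with $\Symm_\R V\to C$. Fixing a cohomological degree $m$ and taking $p=m+2$, the long exact sequence of $0\to\m^p\to C\to C/\m^p\to0$ together with the vanishing $\H^m(\m^p)=\H^{m+1}(\m^p)=0$ yields $\H^m(C)\cong\H^m(C/\m^p)=\bigoplus_{d<p}\H^m(\Symm^dV)$, and the connectivity estimate $\H^m(\Symm^dV)=0$ for $d>m$ collapses this last sum to $\H^m(\Symm_\R V)$. Chasing through, the resulting isomorphism $\H^m(\Symm_\R V)\xrightarrow{\ \sim\ }\H^m(C)$ is precisely the one induced by the comparison map, so that map is a quasi-isomorphism.
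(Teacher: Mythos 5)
Your reduction steps are individually sound, and the strategy is genuinely different from the paper's: Hadamard's lemma does identify $\m^p/\m^{p+1}$ with $\Symm^pV$ naturally, the Eilenberg--Zilber argument does give $\H^m(\bS_\lambda V)=0$ for $m<|\lambda|$, and the final assembly from $\H^m(\m^{m+2})=\H^{m+1}(\m^{m+2})=0$ is correct. The fatal problem is the step you describe as ``totalising the resolution (whose filtration by homological degree is exhaustive and bounded below)''. Your hook resolution runs in the homological direction, against the cohomological (cosimplicial) direction, and its length in cosimplicial level $n$ is $\dim V^n$, which is unbounded in $n$ (this is unavoidable: in the intended application the $V^n$ are levels of Eilenberg--MacLane objects, of polynomially growing dimension). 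So the associated double complex is of second-quadrant type with rows of unbounded width, and the two totalisations genuinely diverge: the filtration by homological degree is exhaustive and bounded below only on $\mathrm{Tot}^{\oplus}$, where it does give $\H^{<p}(\mathrm{Tot}^{\oplus})=0$; but levelwise exactness of the resolution identifies $\H^*(\m^p)$ with $\H^*(\mathrm{Tot}^{\Pi})$ (the staircase lift needed for this is an infinite sum), and on $\mathrm{Tot}^{\Pi}$ that filtration is not exhaustive. These cannot be reconciled, because the principle your step relies on is simply false: a levelwise-exact resolution of a cosimplicial vector space by terms of arbitrarily high connectivity does not bound the connectivity of the augmentation. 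Explicitly, let $E_i$ be the cosimplicial vector space whose conormalisation is $\R\xrightarrow{\mathrm{id}}\R$ placed in degrees $i,i+1$, and let $E_{i+1}\to E_i$ be the identity in degree $i+1$; then $\cdots\to E_1\to E_0\to\R\to 0$ is a levelwise-exact resolution of the constant cosimplicial vector space $\R$ in which every $E_i$ is acyclic, yet $\H^0(\R)=\R\ne 0$ (here $\mathrm{Tot}^{\oplus}$ is acyclic while $\H^0(\mathrm{Tot}^{\Pi})=\R$). Nothing in your totalisation step uses more about the terms $\bS_{(p,1^i)}(V)\ten\C^{\infty}(V^{\vee},\R)$ than their connectivity, so the deduction as written is refuted by this example; your intermediate claim $\H^m(\m^p)=0$ for $m<p$ is in fact true, but only because the Proposition itself is true --- it cannot be extracted from the resolution this way.

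This directional issue is exactly what the paper's proof is engineered to avoid: there one \emph{co}resolves, mapping the algebraic de Rham complex of $\Symm_{\R}V$ into the smooth de Rham complex of $V^{\vee}$ (both levelwise resolutions of $\R$), so the auxiliary grading increases in the same direction as the cosimplicial one, the double complex is first-quadrant, convergence of the brutal-truncation spectral sequence is automatic, and the same Eilenberg--Zilber connectivity estimate finishes the argument. I would also note that your other flagged step --- exactness of the hook complex over $\C^{\infty}(\R^k)$ --- is a genuine theorem rather than a formal consequence of $(x_1,\dots,x_k)$ being a regular sequence: since $\C^{\infty}(\R^k)$ is not Noetherian, one needs either Northcott's true-grade version of the Buchsbaum--Eisenbud acyclicity criterion, or stalkwise exactness (Malgrange's flatness theorem at points of the zero locus, invertibility of some coordinate elsewhere) combined with softness of sheaves of $\C^{\infty}$-modules to pass to global sections. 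That step is repairable; the totalisation step is the one that breaks the proof.
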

\begin{proof}

Let $Q$ be the quotient of $\Symm_{\R}V \to \C^{\infty}(V^{\vee}, \R)$; we wish to show that this is acyclic. Now consider the algebraic and smooth de Rham complexes respectively, giving morphisms
\[
 \Omega^{\bt}_{\R}(\Symm_{\R}V) \to A^{\bt}(V^{\vee}, \R)
\]
of cosimplicial cochain complexes. Each of these cochain complexes is a resolution of $\R$, so the quotient $T$ is levelwise acyclic as a cosimplicial cochain complex, and in particular $\H^*(\Tot T)=0$.

Taking the  filtration on $T$ given by brutal truncations in the cochain direction, we get a convergent spectral sequence
\[
 \H^i(Q \ten \L^j V) \abuts \H^{i+j}(\Tot T)=0.
\]
If $Q$ is not acyclic, let $m$ be least such that $\H^m(Q)\ne 0$.
Since $\H^0V=0$, the Eilenberg--Zilber theorem \cite[Theorem 8.5.1]{W} gives $\H^{< m+j}(Q \ten \L^jV)=0$;  therefore we must have $\H^m(\Tot T) \ne 0$, which is a contradiction. 
\end{proof}

\begin{definition}
 Given $V \in \Ch_{\Cx}$, define $\uline{V} \in \Ch_{\Q}(\Aff_{\Cx})$ by $\uline{V}:= V\ten_{\Cx}\bG_a$. Explicitly, this presheaf is given by $\uline{V}(Z):= V\ten_{\Cx}O(Z)$.
\end{definition}

\begin{definition}\label{Hdef}
For any chain complex $V$, we may form the Eilenberg--MacLane spectrum $HV$ by $(HV)_n := N^{-1}\tau_{\ge 0}(V_{[-n]})$, where $N^{-1}$ is Dold--Kan denormalisation. For $V \in \Ch_{\Cx}$, we therefore obtain a spectrum $\uline{HV}$ in simplicial ind-affine $\Cx$-schemes, which we can regard as a spectral presheaf on $\Aff_{\Cx}$.
\end{definition}

\begin{corollary}\label{keycor}
For $V^{(1)}, \ldots, V^{(r)} \in \Ch_{\Cx}$, there is a canonical zigzag of quasi-isomorphisms
\[
\oR\C^{\infty}(\uline{HV}^{(1)} \wedge\ldots \wedge \uline{HV}^{(r)}, \R)  \simeq \HHom_{\R}(V^{(1)}\ten_{\R} \ldots \ten_{\R} V^{(r)},\R)
\]
of real chain complexes.
\end{corollary}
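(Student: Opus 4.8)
The plan is to unwind the definition $\oR\C^{\infty}(F,\R) = (\oL\Sm^*F_{\Q})^{\vee}$ for the smash $F := \uline{HV}^{(1)} \wedge\ldots\wedge \uline{HV}^{(r)}$, compute the derived functor $\oL\Sm^*$ on the rational chains $F_{\Q}$, and then dualise. First I would reduce to the case where each $V^{(i)}$ is finite-dimensional in each degree: writing an arbitrary $V^{(i)}$ as a filtered colimit of degreewise finite-dimensional subcomplexes, both sides turn this colimit into a limit (on the left because $\oL\Sm^*$ is a left adjoint and $(-)^{\vee}$ sends filtered colimits to cofiltered limits; on the right because $\HHom_{\R}(-,\R)$ does the same), so the general statement follows from the finite-dimensional one. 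Next I would use the colimit description $F_{\Q} = \LLim_n N\bar{\CC}_{\bt}(F^n,\Q)[n]$ to replace the spectrum $F$ by its constituent pointed spaces $F^n$, each of which is a smash of the simplicial ind-affine spaces underlying the $\uline{HV}^{(i)}$, hence modelled by a multisimplicial diagram of affine spaces $\bA(V^{(i)})$.

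For each such space I would compute $\oR\C^{\infty}$ through a cofibrant model of the form $N\Q.X$ with $X$ a (multi)simplicial ind-affine scheme, so that, exactly as in the proof of Proposition \ref{dRprop}, $\oR\C^{\infty}$ of this presheaf is computed by the cosimplicial conormalisation $N_c\C^{\infty}(X,\R)$ of the smooth functions on the levels of $X$. By Lemma \ref{ustarlemma} these levels are affine spaces $\bA(U)$ with $\C^{\infty}(\bA(U),\R) = u^*\Symm_{\Cx}U^{\vee}$, and since the Eilenberg--MacLane spaces involved are (highly) connected, the associated cosimplicial real vector space of linear coordinates has vanishing $\H^0$ and is finite-dimensional in each level. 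This is precisely the hypothesis of Proposition \ref{keyprop}, which I would invoke, together with the Eilenberg--Zilber identification of the reduced chains of a smash with a tensor product, to replace the smooth functions $\C^{\infty}$ by the polynomial functions $\Symm_{\R}$ up to quasi-isomorphism, factor by factor.

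Having reduced to symmetric algebras, the computation becomes algebraic: reduced polynomial functions on a smash of affine spaces assemble into a tensor product of reduced symmetric algebras, and the stabilisation implicit in the colimit $\LLim_n(-)[n]$ selects precisely the linear (degree-one) part of each symmetric-algebra factor, the higher symmetric powers being the unstable contributions that vanish under suspension. Keeping track of the Weil restriction of scalars in Lemma \ref{ustarlemma}, under which the linear functionals on $\bA(V^{(i)})$ are the real-linear ones $\HHom_{\R}(V^{(i)},\R)$, the linear parts tensor over $\R$ to give $\HHom_{\R}(V^{(1)}\ten_{\R}\ldots\ten_{\R}V^{(r)},\R)$. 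Chaining the comparison map of Proposition \ref{keyprop}, the Eilenberg--Zilber maps for the smash, and the stabilisation maps then produces the required canonical zigzag.

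The main obstacle is the control of the derived functor $\oL\Sm^*$ on the smash presheaf, which is not cofibrant: one must build the (multi)simplicial resolution by affine spaces carefully enough that the resulting cosimplicial vector spaces are degreewise finite-dimensional with vanishing $\H^0$, so that Proposition \ref{keyprop} genuinely applies, and one must then verify via the Eilenberg--Zilber theorem and the suspension maps that only the multilinear part of the symmetric algebras survives stabilisation. The bookkeeping for the multi-variable smash, as opposed to a single Eilenberg--MacLane spectrum, is where the Künneth and Eilenberg--Zilber identifications have to be made precise and natural, and where care is needed to ensure the resulting zigzag is canonical rather than merely existent.
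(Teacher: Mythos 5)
Your proposal is correct and follows essentially the same route as the paper's proof: reduction to finite-dimensional $V^{(i)}$ via filtered (co)limits, levelwise computation through reduced smooth functions on smashes of Eilenberg--MacLane spaces, Proposition \ref{keyprop} to replace smooth by polynomial functions, and the observation that stabilisation kills all but the multilinear part of the symmetric algebras, leaving the real multilinear dual. The one point you flag as an unresolved obstacle --- arranging that the cosimplicial vector spaces are levelwise finite-dimensional with vanishing $\H^0$, which does \emph{not} follow from connectivity of the spaces $(HV)_n$ when $V$ has homology in negative degrees --- is handled in the paper by the simple device of replacing $HV$ with the stably equivalent spectrum $H'V:=\{N^{-1}\tau_{\ge 1}(V_{[-n]})\}_n$, whose spaces are reduced and hence connected by construction.
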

\begin{proof}
Since $\oR\C^{\infty}(-, \R)$ sends filtered colimits to filtered limits,  we may reduce to the case where the complexes  $V^{(1)}, \ldots, V^{(r)}$ all have finite total dimension. Now, the spectra $ HV$ and $H'V:= \{N^{-1}\tau_{\ge 1}(V_{[-n]})\}_n $ are stably equivalent, and the cosimplicial vector spaces $(H'V^{(1)} \by \ldots \by H'V^{(r)})_n^{\vee}$ all satisfy the conditions of Proposition \ref{keyprop}. 

Write 
\[
 \bar{\C}^{\infty}(X_1 \wedge\ldots \wedge X_r,\R):= \ker(\C^{\infty}(X_1 \by\ldots \by X_r,\R)\to \prod_i \C^{\infty}(X_1 \by\ldots  \by \{x_i\} \by \ldots \by X_r,\R))
\]
for pointed manifolds $(X_i, x_i)$, and note that this has a cochain resolution by products of vector spaces of the form $\C^{\infty}( (\prod_{j \in S} X_j)  \by \prod_{i \notin S} \{x_i\},\R))$ for subsets $S \subset [1,n]$.

  Substituting in Definition \ref{RCinftydef} then gives
\begin{align*}
 &\oR\C^{\infty}(\uline{HV}^{(1)} \wedge\ldots \wedge \uline{HV}^{(r)}, \R)\\
 &\simeq \Lim_n \bar{\C}^{\infty}( (HV^{(1)})_n \wedge\ldots \wedge (HV^{(r)})_n, \R)_{[-nr]}, \\
&\simeq \Lim_n (\Symm^+_{\R} \tau_{\le 0}(V^{(1)}_{[-n]})^{\vee})
\ten_{\R} \ldots \ten_{\R} (\Symm^+_{\R} \tau_{\le 0}(V^{(r)}_{[-n]})^{\vee})_{[-nr]},\\
&\simeq  V(1)^{\vee}\ten_{\R} \ldots \ten_{\R} V(r)^{\vee},\\
&\simeq  \HHom_{\R}(V(1)\ten_{\R} \ldots \ten_{\R} V(r),\R),
\end{align*}
where $\Symm^+V = \bigoplus_{i>0} \Symm^iV$.
 \end{proof}

\begin{corollary}\label{keycor2}
 For $V^{(1)}, \ldots, V^{(r)} \in \Ch_{\Cx}$, there is a canonical zigzag of quasi-isomorphisms
\[
 \oL \Sm^* ( \uline{V}^{(1)}\ten_{\Q} \ldots \ten_{\Q}\uline{V}^{(r)})^{\vee} \simeq \HHom_{\R}(V^{(1)}\ten_{\R} \ldots \ten_{\R} V^{(r)},\R)
\]
of real chain complexes.
\end{corollary}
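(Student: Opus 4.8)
The plan is to deduce this directly from Corollary \ref{keycor} by recognising the left-hand side as $\oR\C^{\infty}$ of a smash product of Eilenberg--MacLane spectral presheaves. By Definition \ref{RCinftydef}, for a spectrum-valued presheaf $F$ we have $\oR\C^{\infty}(F,\R)=(\oL\Sm^*(F_{\Q}))^{\vee}$, so it suffices to produce a natural equivalence
\[
 (\uline{HV}^{(1)} \wedge\ldots \wedge \uline{HV}^{(r)})_{\Q} \simeq \uline{V}^{(1)}\ten_{\Q} \ldots \ten_{\Q}\uline{V}^{(r)}
\]
in $\Ch_{\Q}(\Aff_{\Cx})$. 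Granting this, the left-hand side of the corollary is exactly $(\oL\Sm^*(\uline{HV}^{(1)} \wedge\ldots \wedge \uline{HV}^{(r)})_{\Q})^{\vee}= \oR\C^{\infty}(\uline{HV}^{(1)} \wedge\ldots \wedge \uline{HV}^{(r)},\R)$, which Corollary \ref{keycor} identifies with $\HHom_{\R}(V^{(1)}\ten_{\R} \ldots \ten_{\R} V^{(r)},\R)$, i.e.\ the right-hand side.

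To establish the displayed equivalence, I would first reduce to the case where each $V^{(i)}$ has finite total dimension, exactly as in the proof of Corollary \ref{keycor}, since $(-)_{\Q}$ and the subsequent duality commute with the relevant filtered colimits. Next I would observe that, objectwise, $\uline{HV}^{(i)}(Z)$ is the Eilenberg--MacLane spectrum $H(V^{(i)}\ten_{\Cx}O(Z))=H(\uline{V}^{(i)}(Z))$, because Dold--Kan denormalisation, good truncation and shift all commute with $\ten_{\Cx}O(Z)$. The key input is then the rational monoidality of $(-)_{\Q}$: being left adjoint to the lax monoidal Eilenberg--MacLane functor $H$, and using that the higher rational homology of Eilenberg--MacLane spectra vanishes (equivalently $H\Q\wedge H\Q\simeq H\Q$, i.e.\ rational K\"unneth), one has $(HW_1 \wedge\ldots \wedge HW_r)_{\Q}\simeq W_1\ten_{\Q}\ldots\ten_{\Q}W_r$ for rational chain complexes $W_i$. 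Applying this objectwise with $W_i=\uline{V}^{(i)}(Z)$, and checking compatibility with the presheaf structure maps, yields the required equivalence; since $\oL\Sm^*$ is left derived it preserves this weak equivalence, and dualising gives the identification of the two sides.

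I expect the one genuinely substantive point to be this rational monoidality step: one must verify that $(-)_{\Q}$ carries the objectwise smash product of the spectral presheaves to the objectwise $\Q$-tensor product of their associated chain-complex presheaves, which rests on the vanishing of the higher rational homology of Eilenberg--MacLane spectra and on its naturality in $Z$. Everything else — the objectwise identification of $\uline{HV}^{(i)}$, the application of $\oL\Sm^*$ and the final substitution into Definition \ref{RCinftydef} — is a matter of unwinding definitions, with the finite-dimensional reduction ensuring that the continuous duals and $\oL\Sm^*$ are well behaved.
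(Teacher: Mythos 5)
Your proposal is correct and follows essentially the same route as the paper: the paper's proof likewise rests on the single equivalence $(\uline{HV}^{(1)} \wedge\ldots \wedge \uline{HV}^{(r)})_{\Q} \simeq \uline{V}^{(1)}\ten_{\Q} \ldots \ten_{\Q}\uline{V}^{(r)}$ (stated there without elaboration), after which it substitutes into Definition \ref{RCinftydef} and invokes Corollary \ref{keycor} exactly as you do. Your additional justification of that equivalence via rational monoidality of $(-)_{\Q}$ and the vanishing of higher rational homology of Eilenberg--MacLane spectra is sound, and is the same fact the paper itself appeals to in the proof of Proposition \ref{keydRprop}.
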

\begin{proof}
The spectral presheaf $\uline{HV}$ on $\Aff_{\Cx}$ has the important property that $\uline{HV}_{\Q}\simeq \uline{V}$; moreover we have
\[
(\uline{HV}^{(1)} \wedge\ldots \wedge \uline{HV}^{(r)})_{\Q} \simeq \uline{V}^{(1)}\ten_{\Q} \ldots \ten_{\Q}\uline{V}^{(r)},
\]
so
\begin{align*}
 \oL \Sm^* ( \uline{V}^{(1)}\ten_{\Q} \ldots \ten_{\Q}\uline{V}^{(r)})^{\vee} &\simeq \oR\C^{\infty}(\uline{HV}^{(1)} \wedge\ldots \wedge \uline{HV}^{(r)}, \R),\\
&\simeq \HHom_{\R}(V^{(1)}\ten_{\R} \ldots \ten_{\R} V^{(r)},\R).
\end{align*}
\end{proof}

We will also need to look at smooth functions on de Rham presheaves:

\begin{proposition}\label{keydRprop}
  For $V^{(1)}, \ldots, V^{(r)} \in \Ch_{\Cx}$, there are  canonical zigzags of quasi-isomorphisms
\[
 \oL \Sm^* ( \uline{V}^{(1)}\ten_{\Q} \ldots \ten_{\Q}\uline{V}^{(r)})_{\dR}^{\vee} \simeq \oR\C^{\infty}((\uline{HV}^{(1)} \wedge\ldots \wedge \uline{HV}^{(r)})_{\dR}, \R) \simeq 0
\]
of real chain complexes.
\end{proposition}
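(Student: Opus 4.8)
The plan is to treat the two quasi-isomorphisms separately: the left-hand equivalence is formal, following the pattern of Corollary~\ref{keycor2}, while the vanishing on the right is the substantive point.

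For the first equivalence, I would observe that the de Rham functor $F \mapsto F_{\dR}$, being precomposition with $B \mapsto B^{\red}$ on the source, commutes with rationalisation and with the smash-to-tensor comparison used in Corollary~\ref{keycor2}. Concretely, $\uline{HV}_{\Q} \simeq \uline{V}$ gives $(\uline{HV}^{(1)} \wedge \ldots \wedge \uline{HV}^{(r)})_{\dR,\Q} \simeq (\uline{V}^{(1)}\ten_{\Q}\ldots\ten_{\Q}\uline{V}^{(r)})_{\dR}$, and substituting into Definition~\ref{RCinftydef} then identifies $\oL\Sm^*((\uline{V}^{(1)}\ten_{\Q}\ldots\ten_{\Q}\uline{V}^{(r)})_{\dR})^{\vee}$ with $\oR\C^{\infty}((\uline{HV}^{(1)} \wedge \ldots \wedge \uline{HV}^{(r)})_{\dR},\R)$, exactly as in that corollary but with every presheaf replaced by its de Rham version.

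For the vanishing I would invoke Corollary~\ref{blanccor0}, which for the spectral presheaf $F = \uline{HV}^{(1)} \wedge \ldots \wedge \uline{HV}^{(r)}$ (locally of finite presentation after the usual reduction to finite total dimension) gives $\oR\C^{\infty}(F_{\dR},\R) \simeq \HHom_{\R}((|F|_{\bS})_{\R},\R)$. It therefore suffices to prove $|\uline{HV}^{(1)} \wedge \ldots \wedge \uline{HV}^{(r)}|_{\bS} \simeq 0$. Using the explicit model of Remark~\ref{KHrmk} together with monoidality of geometric realisation of simplicial spectra (Eilenberg--Zilber for the diagonal), this smash realisation is the smash $\bigwedge_i |\uline{HV}^{(i)}|_{\bS}$, so it is enough to show that a single factor $|\uline{HV}|_{\bS}$ vanishes.

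Finally, Remark~\ref{KHrmk} identifies $|\uline{HV}|_{\bS}$ with the realisation $\ho\LLim_{n \in \Delta^{\op}} H(V\ten_{\Cx}\C^{\infty}(\Delta_{\R}^n,\Cx))$, which (since the Eilenberg--MacLane functor carries the realisation of a simplicial abelian group to the Eilenberg--MacLane spectrum of its normalised complex, and tensoring $V$ with an acyclic complex is again acyclic) equals $H(V\ten_{\Cx} N\C^{\infty}(\Delta_{\R}^{\bt},\Cx))$. The whole statement thus reduces to the smooth Poincar\'e lemma that the simplicial vector space $[n]\mapsto \C^{\infty}(\Delta_{\R}^n,\Cx)$ is acyclic. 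This is where I expect the real work to lie, but it should be short: the underlying simplicial set of this simplicial vector space is exactly $\ssp(\bA^1)$, which is weakly contractible because $\bA^1(\Cx)=\Cx$ is contractible, so by Dold--Kan its homotopy groups---equivalently the homology of its normalised complex---all vanish. This is precisely the $\R$-homotopy invariance built into Blanc's construction noted in Remark~\ref{KHrmk}: the additive group is smoothly contractible, and hence its semi-topological realisation, and that of any smash involving it, is trivial. The only point requiring care is that the levelwise affine pieces are contractible in a way that survives to the realisation rather than merely level-by-level, which is exactly why I route the argument through $|{-}|_{\bS}$ and Dold--Kan rather than through a direct computation à la Corollary~\ref{keycor}.
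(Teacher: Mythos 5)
Your argument is correct, and while your treatment of the first equivalence coincides with the paper's (rationalisation turns the smash of Eilenberg--MacLane presheaves into the tensor product, compatibly with $(-)_{\dR}$ since de Rham is a precomposition), your proof of the vanishing takes a genuinely different route. The paper stays at the level of the individual spaces of the spectra, exactly mirroring the proof of Corollary \ref{keycor}: it reduces to finite-dimensional $V^{(i)}$, resolves $\bar{\C}^{\infty}$ of the smash by terms of the form $\C^{\infty}\bigl((\prod_{j\in S}\uline{U}^{(j)})_{\dR},\R\bigr)$, and applies Proposition \ref{dRprop} to each of these, which gives $\R$ by contractibility of affine space, so the reduced complexes vanish. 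You instead apply Corollary \ref{blanccor0} to the whole smash presheaf, reducing everything to the single claim $|\uline{HV}^{(1)}\wedge\ldots\wedge\uline{HV}^{(r)}|_{\bS}\simeq 0$, which you establish via monoidality of $|-|_{\bS}$ and the computation $|\uline{HV}|_{\bS}\simeq H\bigl(V\ten_{\Cx}\Tot N\C^{\infty}(\Delta_{\R}^{\bt},\Cx)\bigr)\simeq 0$ using Dold--Kan and contractibility of $\ssp(\bA^1)$. Both arguments rest on the same geometric input---contractibility of complex affine space, packaged by Proposition \ref{dRprop}, which your route uses only through its Corollary \ref{blanccor0}---but the organisation differs: your version buys the cleaner and slightly stronger intermediate statement that Blanc's realisation kills any smash of Eilenberg--MacLane presheaves of $\Cx$-complexes (the semi-topological shadow of $\bG_a$ being smoothly contractible), at the cost of needing monoidality of $|-|_{\bS}$, which the paper never states but which your diagonal/Eilenberg--Zilber argument from the model in Remark \ref{KHrmk} correctly supplies; the paper's levelwise argument needs nothing beyond what Corollary \ref{keycor} already set up. Two minor remarks: your hedge about reducing to finite total dimension is unnecessary, since $\uline{HV}$ is locally of finite presentation for arbitrary $V$ (both $H$ and $V\ten_{\Cx}-$ commute with filtered colimits, and smash products preserve filtered homotopy colimits); and in the last step one should note that an acyclic complex of vector spaces is contractible, so that tensoring the possibly unbounded $V$ with $N\C^{\infty}(\Delta_{\R}^{\bt},\Cx)$ indeed yields an acyclic total complex.
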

\begin{proof}
We have  $(HA^{(1)} \wedge\ldots \wedge HA^{(r)})_{\Q} \simeq A^{(1)}\ten_{\Z}\ldots \ten_{\Z}A^{(r)}\ten_{\Z}\Q$ for any chain complexes $A^{(i)}$ of abelian groups, which implies the
  first equivalence. As in the proof of Corollary \ref{keycor}, the second equivalence reduces to calculating spaces of the form $\bar{\C}^{\infty}((\uline{U}^{(1)} \by\ldots \by \uline{U}^{(r)})_{\dR}, \R)$ for finite-dimensional complex vector spaces $U^{(i)}$. But by Proposition \ref{dRprop}, these are all quasi-isomorphic to $\R$, so 
 $\bar{\C}^{\infty}((\uline{U}^{(1)} \wedge\ldots \wedge \uline{U}^{(r)})_{\dR}, \R)\simeq 0$.
\end{proof}

\subsection{Smooth functions on cyclic homology}

\begin{theorem}\label{HCthm}
 Given a  dg category $\cA$ over $\Cx$, there are  canonical zigzags of quasi-isomorphisms
\begin{align*}
\oL\Sm^*(\uline{HC}^{\Q}(\cA))^{\vee} &\simeq \HHom_{\R}(\HC^{\Cx}(\cA),\R)\\
\oL\Sm^*(\uline{HC}^{\Q}(\cA)_{\dR})^{\vee} &\simeq 0.
\end{align*}
of real chain complexes.
\end{theorem}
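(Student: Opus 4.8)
The plan is to realise $\uline{\HC}^{\Q}(\cA)$ as a totalisation of presheaves to which Corollary \ref{keycor2} and Proposition \ref{keydRprop} apply termwise, and then to commute $\oL\Sm^*(-)^{\vee}$ with the totalisation. Cyclic homology over $\Q$ is computed by Connes' $(b,B)$-bicomplex, whose underlying Hochschild complex of $\cA\ten_{\Cx}B$ has $n$-th term $\bigoplus \cA(x_n,x_0)\ten_{\Q}\cA(x_{n-1},x_n)\ten_{\Q}\ldots\ten_{\Q}\cA(x_0,x_1)\ten_{\Q}B^{\ten_{\Q}(n+1)}$, the sum being over sequences of objects of $\cA$. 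Using the identity $(\cA\ten_{\Cx}B)^{\ten_{\Q}(n+1)}=(\uline{\cA}^{\ten_{\Q}(n+1)})(B)$, this exhibits $\uline{\HC}^{\Q}(\cA)$ as the total complex of a bicomplex whose entries are sums of presheaves of the form $\uline{V}^{(1)}\ten_{\Q}\ldots\ten_{\Q}\uline{V}^{(n+1)}$ with $V^{(i)}=\cA(x_{i-1},x_i)\in\Ch_{\Cx}$, and whose structure maps (faces, degeneracies, and the cyclic operator) are natural transformations of such presheaves.

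Next I would apply $\oL\Sm^*(-)^{\vee}$. Since $\oL\Sm^*$ is left Quillen (Lemma \ref{Smlemma}) it preserves homotopy colimits, and the totalisation computing $\HC$ is a homotopy colimit of its Hochschild-degree stages; dualising then turns this into a homotopy limit, so $\oL\Sm^*(-)^{\vee}$ commutes with the totalisation. Corollary \ref{keycor2} applied to each entry gives $\oL\Sm^*(\uline{\cA}^{\ten_{\Q}(n+1)})^{\vee}\simeq\HHom_{\R}(\cA^{\ten_{\R}(n+1)},\R)$, and by naturality the induced differentials are the $\R$-duals of the Hochschild and Connes differentials computed with $\ten_{\R}$ in place of $\ten_{\Q}$; concretely the multiplication $\uline{\cA}\ten_{\Q}\uline{\cA}\to\uline{\cA}$ is carried to the $\R$-dual of the $\R$-bilinear product $\cA\ten_{\R}\cA\to\cA$. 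As $\R$ is a field, $\HHom_{\R}(-,\R)$ is exact, so the resulting total complex is exactly $\HHom_{\R}(\HC^{\R}(\cA),\R)$. It then remains to pass from $\HC^{\R}$ to $\HC^{\Cx}$, for which I would invoke separability of $\Cx/\R$: the isomorphism $\Cx\ten_{\R}\Cx\cong\Cx\times\Cx$ makes $\cA\ten_{\R}\cA^{\op}\cong(\cA\ten_{\Cx}\cA^{\op})\times(\cA\ten_{\Cx}\bar{\cA}^{\op})$ with the multiplication supported on the first factor, giving a canonical quasi-isomorphism of real and complex Hochschild mixed complexes compatible with the Connes operator, hence a canonical equivalence $\HC^{\R}(\cA)\simeq\HC^{\Cx}(\cA)$. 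Combining the two zigzags yields the first claimed equivalence.

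For the second equivalence I would run the identical argument on the de Rham presheaf. Since $(-)_{\dR}$ is precomposition with $(-)^{\red}$, it commutes with both the totalisation and the tensor products of presheaves, so $\uline{\HC}^{\Q}(\cA)_{\dR}$ is the totalisation of the same bicomplex with each entry replaced by $(\uline{V}^{(1)}\ten_{\Q}\ldots\ten_{\Q}\uline{V}^{(n+1)})_{\dR}$. Proposition \ref{keydRprop} shows that $\oL\Sm^*$ of the dual of each such entry is acyclic, and since $\oL\Sm^*(-)^{\vee}$ commutes with the totalisation as above, the whole complex is acyclic, giving $\oL\Sm^*(\uline{\HC}^{\Q}(\cA)_{\dR})^{\vee}\simeq 0$.

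I expect the main obstacle to be the interchange of $\oL\Sm^*(-)^{\vee}$ with the totalisation of the cyclic bicomplex: the bicomplex is unbounded in the cyclic direction and dualisation converts sums into products, so I would need to present $\HC$ as a genuine homotopy colimit (via the filtration by Hochschild degree, or the tower of truncated cyclic complexes) in order legitimately to move $\oL\Sm^*$ inside and dualise levelwise. A secondary point requiring care is the naturality bookkeeping that identifies the induced differentials with the real cyclic differentials, together with the check that the separability equivalence is compatible with the cyclic, and not merely the Hochschild, structure.
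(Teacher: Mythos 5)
Your proposal is correct and takes essentially the same approach as the paper, whose (much terser) proof likewise observes that $\uline{\HC}^{\Q}(\cA)$ is formed from homotopy colimits of rational tensor products of the presheaves $\uline{\cA(x,y)}$, applies Corollary \ref{keycor2} and Proposition \ref{keydRprop} termwise, and invokes the fact that $\Cx$ is \'etale (i.e.\ separable) over $\R$ to identify $\HC^{\R}(\cA)$ with $\HC^{\Cx}(\cA)$ --- exactly the base-change argument you spell out via $\Cx\ten_{\R}\Cx\cong\Cx\by\Cx$. The interchange of $\oL\Sm^*(-)^{\vee}$ with totalisation that you flag is resolved just as you suggest, since the bicomplex computing $\HC$ (unlike $\HN$ or $\HP$) is bounded in the cyclic direction, so its $\oplus$-total complex is a filtered homotopy colimit of finite stages, which the left Quillen functor $\oL\Sm^*$ preserves; your only slip --- the first displayed formula, where the $B$-factors cannot be shuffled out over $\ten_{\Q}$ --- is immaterial, because the identity $(\cA\ten_{\Cx}B)^{\ten_{\Q}(n+1)}=(\uline{\cA}^{\ten_{\Q}(n+1)})(B)$ that you actually use is the correct one.
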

\begin{proof}
Since $\Cx$ is \'etale over $\R$, the map $ \HC^{\R}(\cA) \to\HC^{\Cx}(\cA)$ is a quasi-isomorphism.
 Because $\uline{\HC}^{\Q}(\cA)$ is formed from homotopy colimits of rational tensor products of the complexes $\uline{\cA(x,y)}$ for objects $x, y \in \cA$, Corollary \ref{keycor2} immediately then implies the first equivalence, and Proposition \ref{keydRprop} the second.
\end{proof}

\begin{corollary}\label{maincor}
  If $X$ is  a quasi-compact semi-separated derived scheme over $\Cx$, with
$\sE \in d\Alg_{\sO_X}$ perfect as an $\sO_X$-module,  and $\cA$ is a semi-orthogonal summand of $\per_{dg}(\sE)$, then
the complex $\oR\C^{\infty}(\uline{K}(\cA),\R)$ of smooth functions is given by the cone of the Chern character
\[
 \HHom_{\R}(\HC^{\Cx}(\cA)_{[-2]},\R) \xra{\ch^*} \HHom_{\R}(K^{\Blanc,st}(\cA)_{\R},\R), 
\] 
so
\[
 \oR\C^{\infty}(\uline{K}(\cA),\R) \simeq \HHom_{\R}(  \HN^{\Cx}(\cA)\by^h_{ \HP^{\Cx}(\cA)}K^{\Blanc,st}(\cA)_{\R}, \R).
\]

\end{corollary}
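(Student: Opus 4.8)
The plan is to apply the functor $\oR\C^{\infty}(-,\R)$ to the defining cofibre sequence relating $K$-theory to its de Rham presheaf, and to read off the result from the two pieces, each of which has already been computed. Concretely, I would start from the cofibre sequence
\[
 \uline{K}(\cA)_{\Q} \to \uline{K}(\cA)_{\dR,\Q} \to \cone(\uline{K}(\cA)_{\Q} \to \uline{K}(\cA)_{\dR,\Q})
\]
in $\Ch_{\Q}(\Aff_{\Cx})$. Since $\oR\C^{\infty}(-,\R)=(\oL\Sm^{*}(-))^{\vee}$ is the left-derived functor $\oL\Sm^{*}$ of the left Quillen functor $\Sm^{*}$ followed by the exact dualisation equivalence $\Ch(\pro(\FD\Vect_{\R}))\simeq \Ch_{\R}^{\op}$, it is contravariant and exact. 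It therefore carries the sequence above to a cofibre sequence exhibiting $\oR\C^{\infty}(\uline{K}(\cA),\R)$ as the cone of the induced map
\[
 \oR\C^{\infty}(\cone(\uline{K}(\cA)_{\Q} \to \uline{K}(\cA)_{\dR,\Q}),\R) \to \oR\C^{\infty}(\uline{K}(\cA)_{\dR},\R).
\]

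Next I would evaluate the two terms. The target is handled directly by Corollary \ref{blanccor}, giving $\oR\C^{\infty}(\uline{K}(\cA)_{\dR},\R) \simeq \HHom_{\R}(K^{\Blanc,st}(\cA)_{\R},\R)$. For the source, the hypotheses on $\cA$ are exactly those of Corollary \ref{goodwilliecor}, so Goodwillie's comparison identifies the cone with $\cone(\uline{\HC}^{\Q}(\cA) \to \uline{\HC}^{\Q}(\cA)_{\dR})_{[-1]}$. Applying $\oR\C^{\infty}(-,\R)$ to the cofibre sequence $\uline{\HC}^{\Q}(\cA) \to \uline{\HC}^{\Q}(\cA)_{\dR} \to \cone(\ldots)$ and invoking both halves of Theorem \ref{HCthm} — the vanishing $\oL\Sm^{*}(\uline{\HC}^{\Q}(\cA)_{\dR})^{\vee}\simeq 0$ together with $\oL\Sm^{*}(\uline{\HC}^{\Q}(\cA))^{\vee}\simeq \HHom_{\R}(\HC^{\Cx}(\cA),\R)$ — then computes the source, once the shifts (the $[-1]$ from Goodwillie and the suspensions introduced by contravariance, netting to $[-2]$) are tracked, as $\HHom_{\R}(\HC^{\Cx}(\cA)_{[-2]},\R)$. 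This already yields the first displayed formula, modulo identifying the connecting map.

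The step I expect to be the real obstacle is identifying that connecting map with the dual Chern character $\ch^{*}$. What the construction produces is the dual of the canonical projection $\uline{K}(\cA)_{\dR,\Q} \to \cone(\ldots)$, transported across the Goodwillie equivalence; to name it $\ch^{*}$ one must appeal to the fact (Theorem \ref{goodwillie}) that Goodwillie's zigzag is realised by the Chern character, and check that this is compatible with Blanc's left Kan extension defining $|-|_{\bS}$ and with the semi-topological Chern character $\ch \co K^{\Blanc,st}(\cA)_{\R}\to \HC^{\Cx}(\cA)_{[-2]}$ of the introduction. Granting this naturality, the cone of $\ch^{*}$ is precisely $\oR\C^{\infty}(\uline{K}(\cA),\R)$.

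Finally I would deduce the homotopy-pullback form. Since $\cone(\ch^{*})\simeq \HHom_{\R}(\fib(\ch),\R)$, it suffices to rewrite the fibre of $\ch$. The semi-topological Chern character factors through periodic cyclic homology as $K^{\Blanc,st}(\cA)_{\R}\to \HP^{\Cx}(\cA)\to \HC^{\Cx}(\cA)_{[-2]}$, where the second map is the norm cofibre map in the fibre sequence $\HN^{\Cx}(\cA)\to \HP^{\Cx}(\cA)\to \HC^{\Cx}(\cA)_{[-2]}$, so that $\HN^{\Cx}(\cA)\simeq \fib(\HP^{\Cx}(\cA)\to \HC^{\Cx}(\cA)_{[-2]})$. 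Composing homotopy pullbacks then gives
\[
 \fib(\ch) \simeq K^{\Blanc,st}(\cA)_{\R}\by^h_{\HC^{\Cx}(\cA)_{[-2]}} 0 \simeq \HN^{\Cx}(\cA)\by^h_{\HP^{\Cx}(\cA)} K^{\Blanc,st}(\cA)_{\R},
\]
and dualising recovers the second formula.
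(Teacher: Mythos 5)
Your proposal is correct and follows essentially the same route as the paper, whose proof simply combines Corollary \ref{blanccor}, Corollary \ref{goodwilliecor} and Theorem \ref{HCthm} for the first statement and invokes the fibre sequence $\HN \to \HP \to \HC_{[-2]}$ for the second. Your extra care in tracking the shifts, identifying the connecting map with $\ch^*$ via Theorem \ref{goodwillie}, and noting the factorisation of $\ch$ through $\HP^{\Cx}(\cA)$ just makes explicit what the paper's terse proof leaves implicit.
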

\begin{proof}
 The first statement just combines Corollaries \ref{blanccor} and \ref{goodwilliecor} with Theorem \ref{HCthm}. The second statement then follows because $\HC_{[-2]}$ is the cone of the map $\HN \to \HP$ from negative cyclic homology to periodic cyclic homology.
\end{proof}

See Remarks \ref{orlovrmk} for examples satisfying these hypotheses.

\begin{remark}\label{Delignecoho}
The expression $\HN^{\Cx}(\cA)\by^h_{ \HP^{\Cx}(\cA)}K^{\Blanc,st}(\cA)_{\R}$ (or equivalently the homotopy fibre of  $\ch \co K^{\Blanc,st}(\cA)_{\R} \to \HC^{\Cx}(\cA)_{[-2]}$)  is very closely related  to real Deligne cohomology. If $X$ is a separated $\Cx$-scheme of finite type, then $K^{\Blanc, top}(X):= K^{\Blanc,st}(X)[\beta^{-1}]$ is the topological $K$-theory of $X(\Cx)_{\an}$. 

For a smooth separated $\Cx$-scheme $X$, the HKR isomorphism then identifies $\HN^{\Cx}(X)\by^h_{ \HP^{\Cx}(\cA)}(K^{\Blanc,st}(\cA)_{\R}[\beta^{-1}])$ with
\[
 \prod_{p \in \Z} \cocone(\oR\Gamma(X(\Cx)_{\an}, \R(2\pi i)^p) \xra{\ch} \oR\Gamma(X, \sO_X \to \ldots \to \Omega^{p-1}_{X/\Cx}))^{[2p]}  
\]
identifying chain and cochain complexes in the obvious way;
when $X$ is smooth and proper, this is just real Deligne cohomology $\prod_{p \in \Z}\oR\Gamma_{\cD}(X, \R(p))^{[2p]}$.

There does not yet seem to be a simple description of $K^{\Blanc,st}(X)$, but for smooth affines it is connective by \cite[Theorem 4.6]{blancToplKTh}, and for products of projective spaces it is the connective truncation of topological $K$-theory by \cite[Theorems 4.5 and 4.6]{blancToplKTh} and \cite[Theorem 7.3]{ThomasonTrobaugh}. Taking Gysin sequences, this means that for any complement $X$ of a  product of projective spaces by a disjoint union of products of projective spaces (the simplest interesting examples are $X= \bA^1, \bG_m$), we have $ K^{\Blanc,st}(X)_{\Q} \simeq  \prod_{p\ge 0} W_0\oR\Gamma(X(\Cx)_{\an},\Q(p))[2p]$, for Deligne's weight filtration $W$.
\end{remark}

\begin{remark}\label{regulator}
For $Z \in \Aff_{\Cx}$, there is a natural morphism $\C^{\infty}(Z, \R) \to \R^{Z(\Cx)}$ from smooth functions to discontinuous functions. The spectral derived left Kan extension of $ Z \mapsto (\R^{Z(\Cx)})^{\vee}$ is just given by $F \mapsto \HHom_{\R}(F(\Spec \Cx)_{\R}, \R)^{\vee}$, so there is a natural map $\oR \C^{\infty}(F, \R) \to \HHom_{\R}(F_{\R}(*), \R) $. Similarly, we have $\oR O^{\infty}(F) \to \HHom_{\R}(F_{\R}(*), \R)$ for spectral presheaves on $\C^{\infty}\Aff$. 

Applied to $\uline{K}(\cA)$, these give $\oR \C^{\infty}(\uline{K}(\cA) , \R) \to \HHom_{\R}(K(\cA)_{\R}, \R)$, which recovers Beilinson's regulator \cite{beilinson} on duals when applied in the context of Remark \ref{Delignecoho}.
\end{remark}


\subsection{Related constructions and generalisations}\label{vistas}

\subsubsection{Smaller diagram categories}\label{smallerdiagrams}

As observed in \S \ref{derivedcompletionsn}, the calculation of $\oR\C^{\infty}(\uline{K}(\cA), \R)$ can be made by looking at the restricted presheaf $\uline{K}(\cA)|_{\C}$:
\[
 \oR\C^{\infty}(\uline{K}(\cA), \R)\simeq \oR\C^{\infty}(\uline{K}(\cA)|_{\C}, \R),
\]
 taking $\C$ to be any of the categories $\Aff_{\Cx}$, $\Aff_{\Cx, \fin}, \C^{\infty}\Aff, \C^{\infty}\Aff_{\fin}$. The same is true for $K^{\Blanc, st}(\cA)$, which  \cite[Theorem  3.18]{blancToplKTh} shows can also be recovered by restricting to the category of smooth complex affines.  
By Remark \ref{KHrmk} we can even recover $K^{\Blanc, st}(\cA)$ from its restriction to the simplex category  $\{\Delta^n_{\R} \cong \R^n\}_n$ living inside $\C^{\infty}\Aff_{\fin}$. 

However, calculations involving $\uline{K}(\cA)_{\dR}$ only make sense in a  category having  non-reduced objects. The smallest choice of category $\C$ for which Corollary \ref{blanccor}, Theorem \ref{HCthm} and Corollary \ref{maincor} would still hold is probably the full subcategory of $\ind(\C^{\infty}\Aff)$ consisting of objects of the form $\R^m \by \widehat{\R^n}_0$, with associated pro-$\C^{\infty}$-ring $\C^{\infty}(\R^m,\R)[[ t_1, \ldots, t_n]]$. Theorem \ref{HCthm} still holds because $(\R^m)_{\dR}$ has a simplicial resolution by such spaces. Corollary \ref{blanccor} still holds because  $\Hom_{\C}(\Delta_{\R}^{\bt},-)$ is a resolution of the constant copresheaf $*$ on $\C$, while $\C^{\infty}((-)_{\dR},V)$ is quasi-isomorphic to the constant presheaf $V$.


\subsubsection{Algebraic functions on algebraic $K$-theory}

Instead of looking at the Kan extension of smooth functions $Z \mapsto \C^{\infty}(Z, \R)$ applied to $\uline{K}(\cA)$, we could look at the Kan extension of algebraic functions $Z \mapsto \Gamma(Z, \sO)$. The proof of Corollary \ref{blanccor} adapts to give  $\oR\Gamma(\uline{K}(\cA)_{\dR},\sO)\simeq \HHom_{\Cx}(K^{\Blanc,st}(\cA)_{\Cx},\Cx)$, while Theorem \ref{HCthm} adapts without needing to appeal to Proposition \ref{keyprop}. The analogue of Corollary \ref{maincor} then gives $ \oR\Gamma(\uline{K}(\cA)_{\dR},\sO)$ dual to the homotopy fibre of $ \ch \co K^{\Blanc,st}(\cA)_{\Cx} \to \HC^{\Cx}(\cA)_{[-2]}$.


\subsubsection{Refinements for non-proper dg categories}
 
As in \S \ref{derivedcompletionsn}, the smooth functions $\oR\C^{\infty}(\uline{K}(\cA),\R)$ are recovered from the presheaf $\uline{K}(\cA)|_{\C^{\infty}\Aff}$. This presheaf is only really a sensible object of study for proper dg categories, because for manifolds $Z$, the map 
\[
 \C^{\infty}(Z,\R) \ten_{\R} \cA(x,y) \to \C^{\infty}(Z, \cA(x,y))
\]
 only tends to be a quasi-isomorphism when $\cA(x,y)$ is a perfect complex.
 
When cyclic homology is finite-dimensional, it can be recovered from Theorem \ref{HCthm} by taking duals.
If we wanted to refine the comparison of Theorem \ref{HCthm}  to recover cyclic homology  more generally, we would instead have to work with ind-objects,    looking at the left Kan extension of the composition $(O^{\infty})^{\vee} \co \C^{\infty}\Aff \to \Ch(\pro(\FD\Vect_{\R})) \to \ind(\Ch(\pro(\FD\Vect_{\R})))$. 

A slightly more natural choice would be the left Kan extension of functor 
$(O^{\infty})^{\vee} \co \ind( \C^{\infty}\Aff) \to \ind(\Ch(\pro(\FD\Vect_{\R})))$,  applied to the presheaf
\[
\uline{K}(\cA)( \Lim_{\alpha} B_{\alpha}) := K(\ho\Lim_{\alpha} \cA\ten_{\R}B_{\alpha}) 
\]
 on $\ind(\C^{\infty}\Aff)$.

However, since Deligne cohomology for non-proper schemes is not defined in terms of their cyclic homology, there is less motivation for seeking such a refinement.


\subsubsection{$K$-theory of  dg categories enriched in topological vector spaces}

In \cite{smoothK}, we considered certain Fr\'echet algebras and their associated algebraic $K$-theory presheaves on a category of Fr\'echet manifolds, and thus obtained a comparison between real Deligne cohomology and smooth functions on $K$-theory. The setting of this paper is inadequate to recover such a result because $\C^{\infty}$-rings only model smoothly realcompact spaces.

As in \cite{KrieglMichor}, the largest class of topological vector spaces for which it makes sense to talk about $\C^{\infty}$-morphisms consists of the convenient vector spaces.  There is as natural monoidal structure on the category of convenient vector spaces, given by $c^{\infty}$-completions $\hten$ of bornological tensor products --- by \cite[Proposition 5.8]{KrieglMichor}, these operations coincide with projective tensor products on the subcategory of Fr\'echet spaces. 

 For any category $\cA$ enriched in chain complexes of convenient $\Cx$-vector spaces, and any infinite-dimensional manifold $M$ modelled on convenient vector spaces, there is then  a dg category $\C^{\infty}(M, \cA)$ over $\Cx$.

\medskip
To adapt the methods of this paper to such a setting, we need to incorporate non-reduced objects in order to define meaningful de Rham presheaves. The obvious generalisation of $\C^{\infty}$-rings might be the convenient coalgebras of \cite[23.13]{KrieglMichor}, but these do not seem to have a suitable analogue of the nilradical ideal. Instead, we can take a lead from \S \ref{smallerdiagrams} and just consider formal completions of manifolds. Let $\cJ$ be the category of products $T \by \hat{V}_0$ for convenient vector spaces $T,V$, with morphisms
\begin{align*}
 \Hom_{\cJ}(S \by \hat{U}_0, T \by \hat{V}_0)&= \C^{\infty}(S,J^{\infty}_0(U,T) \by\Poly^{\infty}(U,V))\\ 
&=\C^{\infty}(S,J^{\infty}_0(U,T) \by J^{\infty}_0(U,V)_0),
\end{align*}
for the jet spaces $J^{\infty}$ of \cite[41.1]{KrieglMichor}. The following reasoning will apply equally well if we allow $T$ to be a star-shaped $c^{\infty}$-open in a convenient vector space.

Given a convenient dg category $\cA$, we  have a presheaf $\uline{K}(\cA)$ defined on $\cJ$ by
\begin{align*}
 &\uline{K}(\cA)(T \by \hat{V}_0):= \ho\Lim_n K(\C^{\infty}(T,J^n_0(V,\cA))), \\
 \text{with }\quad &\uline{K}(\cA)_{\dR}(T \by \hat{V}_0):= \uline{K}(\cA)(T)=  K(\C^{\infty}(T,\cA)).
\end{align*}

\medskip
The Poincar\'e lemma of \cite[33.20]{KrieglMichor} allows the proofs of Corollary \ref{blanccor} and Theorem \ref{HCthm} to adapt to this context, replacing tensor products $\ten_{\R}$ with their bornological $c^{\infty}$-completions $\hten$. 
Moreover, the analogues of Corollary \ref{blanccor} and Theorem \ref{HCthm} hold for smooth functions $\C^{\infty}(-,V)$ with coefficients in any convenient vector space $V$. 

As in \cite[23.13]{KrieglMichor}, there is a free convenient vector space functor $\lambda$ from Fr\"olicher spaces to convenient vector spaces, with the property that the space $L(\lambda X,V)$ of bounded linear maps is isomorphic to $\C^{\infty}(X,V)$. For finite-dimensional manifolds $M$, $\lambda M= \C^{\infty}(M,\R)'$, the space of compactly supported distributions on $M$. The functor $\lambda$ extends naturally to $\cJ$ by setting $ \lambda(T \by \hat{V}_0 ):= (\lambda T) \hten (\bigoplus_{i \ge 0} \widehat{\Symm}^i_{\R}V)$.

\medskip
We can then consider the derived enriched left Kan extension $\oL\lambda$ of $\lambda$ along the Yoneda embedding of $\cJ$ in spectral presheaves.

Observe that if $\cA$ is a proper dg category over $\Cx$, the maps
$
 \cA\ten_{\R}\C^{\infty}(T,J^n_0(V,\R)) \to \C^{\infty}(T,J^n_0(V,\cA))
$
are quasi-equivalences, so $\oL\lambda \uline{K}(\cA)$ can be recovered from the presheaf $\uline{K}(\cA)|_{\C^{\infty}\Aff}$ considered throughout this paper, and its complex of bounded functionals satisfies 
$
\oL\lambda \uline{K}(\cA)'= L(\oL\lambda \uline{K}(\cA), \R) \simeq \oR\C^{\infty}( \uline{K}(\cA)|_{\Aff_{\Cx}}, \R).
$

\bigskip
For all convenient dg categories $\cA$ over $\Cx$,  Corollary \ref{blanccor} and Remark \ref{KHrmk}   adapt to give
\[
 \oL\lambda \uline{K}(\cA)_{\dR} \simeq \Tot K(\C^{\infty}(\Delta_{\R}^{\bt}, \cA))_{\R},
\]
a chain homotopy equivalence of chain complexes of convenient vector spaces, where the right-hand side is given the finest $\R$-linear topology.

\medskip
The analogue of Corollary \ref{maincor} is then that for convenient dg $\Cx$-algebras $A$ concentrated in non-negative chain degrees, we have a chain homotopy equivalence
\[
 \oL\lambda \uline{K}(A) \simeq \cocone(\Tot K(\C^{\infty}(\Delta_{\R}^{\bt}, A))_{\R} \xra{\ch} \widehat{\HC^{\Cx}}(A)_{[-2]}),
\]
where $\widehat{\HC^{\Cx}}$ is defined by analogy with cyclic homology, but using  $\hten_{\Cx}$ instead of $\ten_{\Cx}$. 

\medskip
Dually to  Remark \ref{regulator}, the morphism $ T \to \lambda(T \by \hat{V}_0 )$ of sets which sends points to distributions induces a morphism
\[
K(\cA)_{\R}\to  \oL\lambda \uline{K}(\cA)
\]
on Kan extensions, which we can  think of as a generalisation of Beilinson's regulator.


\bibliographystyle{alphanum}
\bibliography{references.bib}
\end{document}